\title{New orientable sequences}
\author{Chris J. Mitchell and Peter R. Wild
\\Information Security Group, Royal Holloway, University of London\\
\href{mailto:me@chrismitchell.net}{me@chrismitchell.net};
~~~~\href{mailto:peterrwild@gmail.com}{peterrwild@gmail.com}}
\date{3rd July 2025}
\newtheorem{lemma}{Lemma}[section]
\newtheorem{corollary}[lemma]{Corollary}
\newtheorem{theorem}[lemma]{Theorem}
\newtheorem{definition}[lemma]{Definition}
\newtheorem{remark}[lemma]{Remark}
\newtheorem{example}[lemma]{Example}
\begin{document}

\maketitle

\begin{abstract}
Orientable sequences of order $n$ are infinite periodic sequences with symbols drawn from a finite
alphabet of size $k$ with the property that any particular subsequence of length $n$ occurs at most
once in a period \emph{in either direction}.  They were introduced in the early 1990s in the
context of possible applications in position sensing. Bounds on the period of such sequences and a
range of methods of construction have been devised, although apart from very small cases a
significant gap remains between the largest known period for such a sequence and the best known
upper bound. In this paper we first give improved upper bounds on the period of such sequences. We
then give a new general method of construction for orientable sequences involving subgraphs of the
de Bruijn graph with special properties, and describe two different approaches for generating such
subgraphs. This enables us to construct orientable sequences with periods meeting the improved
upper bounds when $n$ is 2 or 3, as well as $n=4$ and $k$ odd. For $4\leq n\leq 8$, in some cases
the sequences produced by the methods described have periods larger than for any previously known
sequences.
\end{abstract}

\section{Introduction} \label{section:Introduction}

\subsection{Orientable sequences}

Orientable sequences were introduced in the early 1990s \cite{Burns92,Burns93,Dai93} in the context
of possible applications in position sensing.  An orientable sequence of order $n$ is an infinite
periodic sequence with symbols drawn from a finite alphabet --- typically $\mathbb{Z}_k$ for some
$k$ --- with the property that any particular subsequence of length $n$, referred to throughout as
an $n$-tuple, occurs at most once in a period \emph{in either direction}.  That is, if anyone
reading the sequence observes $n$ consecutive symbols, they can deduce both the direction in which
they are reading and their position within one period of the sequence. Gabri\'{c} and Sawada
\cite{Gabric25} provide an interesting discussion of further possible applications as well as their
relationship to strings relevant to DNA computing.

The early work referred to above focussed on the binary case, i.e.\ where $k=2$; Dai et al.\
\cite{Dai93} provided both an upper bound on the period for this case and a method of construction
yielding sequences with asymptotically optimal periods. More recently, Mitchell and Wild
\cite{Mitchell22} showed how the Lempel homomorphism \cite{Lempel70} could be applied to
recursively generate binary orientable sequences with periods a large fraction of the optimal
value.  In 2024, Gabri\'{c} and Sawada \cite{Gabric24,Gabric24b} described a highly efficient
method of generating binary orientable sequences with the largest known periods.

In 2024, Alhakim et al.\ \cite{Alhakim24a} studied the general alphabet case, i.e.\ where $k>2$.
They gave an upper bound on the period of orientable sequences for all $n$ and $k$, and also
described a range of methods of construction using the Alhakim and Akinwande generalisation of the
Lempel homomorphism to arbitrary finite alphabets \cite{Alhakim11}.  Since then a range of
construction methods have been proposed \cite{Gabric25,Mitchell24a,Mitchell25a}; of particular
interest is the method of Gabri\'{c} and Sawada \cite{Gabric25}, who showed how to construct
sequences with asymptotically optimal period for any $n$ and $k>2$ using a cycle-joining approach.

In this paper we first give new upper bounds on the period of an orientable sequence, which improve
on the previous bounds for $n\geq3$. We also describe a novel general approach to the construction
of orientable sequences for any $k$ and $n$. This approach involves showing, using graph-theoretic
arguments, that a subset of $n$-tuples with special properties can always be `joined' to create an
orientable sequence.  Two methods for generating sets of $n$-tuples with the appropriate set of
properties are described. The orientable sequences obtained have optimal periods for $n=2$, $n=3$
and $n=4$ when $k$ is odd. For $4\leq n\leq 8$, for some values of $k$ the sequences generated have
longer period than any previously known sequences, although sequences generated by the method of
Gabri\'{c} and Sawada \cite{Gabric25} have the largest known period for larger values of $n$.

\subsection{Basic definitions}

 In this paper we consider periodic sequences $(s_i)$ with elements from $\mathbb{Z}_k$
for some $k$, which we refer to as $k$-ary.  For a sequence $S = (s_i)$ and $n\geq1$
we write $\mathbf{s}_n(i) = (s_i,s_{i+1},\ldots,s_{i+n-1})$, i.e.\ a string of $n$ consecutive
symbols occurring in the sequence at positions $i,i+1,\ldots,i+n-1$.  We refer to such
strings as $n$-tuples. Since we are interested in tuples occurring either forwards or
backwards in a sequence, we also introduce the notion of a reversed tuple, so that if $\mathbf{u}
=
(u_0,u_1,\ldots,u_{n-1})$ is a $k$-ary $n$-tuple then $\mathbf{u}^R = (u_{n-1},u_{n-2},
\ldots,u_0)$ is its \emph{reverse}.  We are also interested in negating all the elements of a
tuple, and hence if $\mathbf{u} = (u_0,u_1,\ldots,u_{n-1})$ is a $k$-ary $n$-tuple, we write
$-\mathbf{u}$ for $(-u_0,-u_1,\ldots,-u_{n-1})$.

\begin{definition}[\cite{Alhakim24a}]
A $k$-ary \emph{$n$-window sequence $S = (s_i)$} is a periodic sequence of elements from
$\mathbb{Z}_k$ ($k>1$, $n>1$) with the property that no $n$-tuple appears more than once in a
period of the sequence, i.e.\ with the property that if $\mathbf{s}_n(i) = \mathbf{s}_n(j)$ for
some $i,j$, then $i \equiv j \pmod m$ where $m$ is the period of the sequence.
\end{definition}

A $k$-ary de Bruijn sequence \emph{of order $n$} is then simply an $n$-window sequence in which
every $k$-ary $n$-tuple appears once in a period, i.e.\ an $n$-window sequence of maximal period.

Following Alhakim et al.\ \cite{Alhakim24a} we also introduce the de Bruijn digraph. For positive
integers $n$ and $k$ greater than one, let $\mathbb{Z}_k^n$ be the set of all $k^n$ vectors of
length $n$ with entries from the group $\mathbb{Z}_k$ of residues modulo $k$. The order $n$ de
Bruijn digraph, $B_k(n)$, is a directed graph with $\mathbb{Z}^n_k$ as its vertex set in which,
for
any two vectors $\textbf{x} = (x_0,x_1,\ldots,x_{n-1})$ and $\textbf{y} =
(y_0,y_1,\ldots,y_{n-1})$, the pair $(\textbf{x},\textbf{y})$ is an edge if and only if $y_i =
x_{i+1}$ for every $i$ ($0\leq i< n-1$). We label such an edge with the $(n+1)$-tuple
$(x_0,x_1,\ldots,x_{n-1},y_{n-1})$.

Note that we have defined two ways of specifying an edge in $B_k(n)$, namely as either a pair of
vertices $(\textbf{a},\textbf{b})$, where $\textbf{a},\textbf{b}$ are $k$-ary $n$-tuples, or as a
single $k$-ary $(n+1)$-tuple $\textbf{x}$.  Note that, in this case, the $(n+1)$-tuple
$\textbf{x}^R$ denotes the same edge as $(\textbf{b}^R,\textbf{a}^R)$.

It is straightforward to verify that there is a correspondence between a $k$-ary de Bruijn
sequence
of order $n$ and a directed Eulerian circuit in $B_{k}(n-1)$, in which consecutive edges in the
circuit correspond to consecutive $n$-tuples in the sequence.

\begin{definition}[\cite{Alhakim24a}] A $k$-ary $n$-window sequence $S = (s_i)$ is
said to be an \emph{orientable sequence of order $n$}, an $\mathcal{OS}_k(n)$, if $\mathbf{s}_n(i)
\neq \mathbf{s}_n(j)^R$, for any $i,j$.
\end{definition}

\subsection{This paper}

The remainder of this paper is structured as follows.  We start, in
Section~\ref{section:upper_bound}, by developing new upper bounds on the period of an orientable
sequence. In Section~\ref{section:Eulerian_cycles} we describe our novel general method of
construction, which involves regarding $n$-tuples as edges in the de Bruijn graph of order $n-1$. A
first method of constructing a set of $n$-tuples with the desired properties is presented in
Section~\ref{section:construction}.  A second method, involving use of the Lempel homomorphism from
the de Bruijn graph of order $n$ to the de Bruijn graph of order $n-1$, is given in
Section~\ref{section:Lempel_construction}. For both constructions simple examples are given, and
the periods of the orientable sequences generated are tabulated for small values of $n$ and $k$.
This leads naturally to Section~\ref{section:other_work}, where a table is given of the largest
known periods for orientable sequences for $n$ and $k$ at most 8.  As discussed briefly in the
concluding section, Section~\ref{section:conclusions}, this table reveals that for $n>3$ there
remains a significant gap between the largest known period for an orientable sequence and the
existing upper bound.

Preliminary versions of some of the results in this paper were presented at Sequences 2025 (London,
February 2025) and the 5th Pythagorean Conference (Kalamata, June 2025).

\section{New upper bounds on the period of an orientable sequence}  \label{section:upper_bound}

\subsection{Preliminaries}  \label{subsection:preliminaries_for_bounds}

We start by giving some simple definitions and elementary results relating to these definitions.

\begin{definition}
Suppose $n\geq1$ and $k\geq2$.  If $\mathbf{a}=(a_0,a_1,\ldots,a_{n-1})$ is a $k$-ary $n$-tuple,
then $\mathbf{a}$ is said to be \emph{uniform} if and only if $a_i=a_j$ for every $i,j\in
\{0,1,\ldots,n-1\}$.
\end{definition}

\begin{definition}
Suppose $n\geq1$ and $k\geq2$.  If $\mathbf{a}=(a_0,a_1,\ldots,a_{n-1})$ is a $k$-ary $n$-tuple,
then $\mathbf{a}$ is said to be \emph{alternating} if and only if there exist $c_0$ and $c_1$ such
that $a_{2i}=c_0$ and $a_{2i+1}=c_1$ for every $i$ such that $i\ge0$ and $2i+1\le n-1$ and
$\mathbf{a}$ is not uniform.
\end{definition}

\begin{definition}
Suppose $n\geq1$ and $k\geq2$.  If $\mathbf{a}=(a_0,a_1,\ldots,a_{n-1})$ is a $k$-ary $n$-tuple,
then $\mathbf{a}$ is said to be \emph{symmetric} if and only if $a_i=a_{n-1-i}$ for every $i\in
\{0,1,\ldots,n-1\}$.  An $n$-tuple which is not symmetric is referred to as \emph{asymmetric}.
\end{definition}

We define two key subgraphs of the de Bruijn digraph.

\begin{definition}
Suppose $k\geq2$ and $n\geq1$.  Let $B_k^*(n)$ be the subgraph of the de Bruijn digraph $B_k(n)$
with all the edges corresponding to symmetric $(n+1)$-tuples removed.
\end{definition}

\begin{remark}  \label{remark:number_symmetric_tuples}
Since there are $k^{\lceil(n+1)/2\rceil}$ symmetric $(n+1)$-tuples, $B_k^*(n)$ contains
$k^{n+1}-k^{\lceil(n+1)/2\rceil}$ edges.
\end{remark}

\begin{definition}
Suppose $k\geq2$ and $n\geq2$.  Suppose $S$ is an $\mathcal{OS}_k(n)$.  Let $B(S,n)$ be the
subgraph of $B_k(n-1)$ with vertices the vertices of $B_k(n-1)$ and with edges corresponding to
those $n$-tuples which appear in either $S$ or $S^R$.  We refer to $B(S,n)$ as the
sequence-subgraph.
\end{definition}

The following simple lemma is key.

\begin{lemma}  \label{lemma:BSn_basic_properties}
Suppose $k\geq2$ and $n\geq2$.  Suppose $S$ is an $\mathcal{OS}_k(n)$ of period $m$.  Then:
\begin{itemize}
\item[i)] $B(S,n)$ contains $2m$ edges;
\item[ii)] every vertex of $B(S,n)$ has in-degree equal to its out-degree; and
\item[iii)] $B(S,n)$ is a subgraph of $B^*_k(n-1)$.
\end{itemize}
\end{lemma}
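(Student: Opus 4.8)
The plan is to handle the three parts in their natural order, exploiting the correspondence between the sequence $S$ and a closed directed walk in $B_k(n-1)$ whose successive edges are labelled by the $n$-tuples $\mathbf{s}_n(i)$.

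For part (i), I would first observe that, since $S$ is an $n$-window sequence of period $m$, the $n$-tuples $\mathbf{s}_n(0),\mathbf{s}_n(1),\ldots,\mathbf{s}_n(m-1)$ are pairwise distinct, contributing $m$ distinct edges. The reversed sequence $S^R$ is also periodic of period $m$ and is itself an $n$-window sequence, since $\mathbf{s}_n(i)^R = \mathbf{s}_n(j)^R$ forces $\mathbf{s}_n(i) = \mathbf{s}_n(j)$; hence its $n$-tuples yield a further $m$ distinct edges. The orientable condition $\mathbf{s}_n(i) \neq \mathbf{s}_n(j)^R$ guarantees that these two families of $n$-tuples are disjoint, so $B(S,n)$ has exactly $2m$ edges. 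The one point requiring care here is to invoke both the window property (for distinctness within each family) and orientability (for disjointness across the two families).

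For part (ii), the key observation is that the $m$ $n$-tuples of $S$, read in order, form a closed directed walk in $B_k(n-1)$: the edge $\mathbf{s}_n(i)$ runs from the vertex $(s_i,\ldots,s_{i+n-2})$ to the vertex $(s_{i+1},\ldots,s_{i+n-1})$, consecutive edges share the intermediate vertex, and the walk closes up after a full period. Each visit of a closed walk to a vertex consumes one incoming and one outgoing edge, so the subgraph traced by $S$ alone is balanced, meaning in-degree equals out-degree at every vertex; the same applies to the closed walk traced by $S^R$. Since by part (i) these two subgraphs are edge-disjoint and together constitute $B(S,n)$, adding the in- and out-degrees vertex by vertex preserves the balance, giving the claim.

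For part (iii), I would argue by contradiction: suppose some edge of $B(S,n)$ corresponds to a symmetric $n$-tuple $\mathbf{x}$, so that $\mathbf{x} = \mathbf{x}^R$. If $\mathbf{x} = \mathbf{s}_n(i)$ for some $i$ (the case where $\mathbf{x}$ appears in $S^R$ is identical after reversing), then $\mathbf{s}_n(i) = \mathbf{s}_n(i)^R$, which directly violates the orientable condition with $j=i$. Hence every edge of $B(S,n)$ is asymmetric, and since $B_k^*(n-1)$ is precisely $B_k(n-1)$ with the symmetric-tuple edges deleted, $B(S,n)$ is a subgraph of $B_k^*(n-1)$. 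None of the three steps is genuinely hard; the only real subtlety is the careful bookkeeping in part (i), which I expect to be the main thing to get right.
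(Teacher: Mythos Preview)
Your proposal is correct and follows essentially the same approach as the paper's proof: counting $m$ distinct tuples from $S$ and $m$ from $S^R$ with disjointness by orientability for (i), using that $S$ and $S^R$ trace edge-disjoint closed walks for (ii), and observing that a symmetric tuple would violate the orientable condition for (iii). If anything, your argument is more carefully spelled out than the paper's, which is rather terse.
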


\begin{proof}
\begin{itemize}
\item[i)] Since $S$ has period $m$, there are a total of $2m$ $n$-tuples appearing in $S$ and
    $S^R$.  They are all distinct since $S$ is an $\mathcal{OS}_k(n)$.
\item[ii)] $S$ and $S^R$ correspond to edge-disjoint Eulerian circuits in $B(S,n)$, and the
    result follows.
\item[iii)] This is immediate since if $S$ is an $\mathcal{OS}_k(n)$, then neither $S$ nor
    $S^R$ can contain any symmetric $n$-tuples.
\end{itemize}
\end{proof}

The bound we give in this section derives from a careful analysis of the sequence-subgraph $B(S,n)$
of an $\mathcal{OS}_k(n)$.  In particular we consider the impact on the possible number of edges in
this subgraph arising from the constraints we give in Sections~\ref{subsection:in-out-constraints}
and \ref{subsection:parity-constraints} below. Any upper bound on the number of edges in $B(S,n)$
immediately gives rise to a bound on the period of an orientable sequence from
Lemma~\ref{lemma:BSn_basic_properties}(i).

\subsection{In-out-degree constraints on the sequence-subgraph}  \label{subsection:in-out-constraints}

In this section and the next we consider properties of the sequence-subgraph for $S$ that can be
derived from the assumption that $S$ is an $\mathcal{OS}_k(n)$.  We first need the following
definition.

\begin{definition}
Suppose $k\geq2$ and $n\geq3$.  An $n$-tuple $(a_0,a_1,\ldots,a_{n-1})$ is said to be
\emph{left-semi-symmetric} if $a_i=a_{n-i-2}$, $0\leq i\leq n-2$.  Equivalently,
$(a_0,a_1,\ldots,a_{n-1})$ is left-semi-symmetric if and only if $(a_0,a_1,\ldots,a_{n-2})$ is
symmetric.

Analogously, an $n$-tuple $(a_0,a_1,\ldots,a_{n-1})$ is said to be \emph{right-semi-symmetric} if
$a_i=a_{n-i}$, $1\leq i\leq n-1$.  Equivalently, $(a_0,a_1,\ldots,a_{n-1})$ is right-semi-symmetric
if and only if $(a_1,a_2,\ldots,a_{n-1})$ is symmetric.
\end{definition}

We can now state the following.

\begin{lemma}  \label{lemma:vertex_degree_k-1}
Suppose $k\geq2$ and $n\geq3$.  A vertex in $B^*_k(n)$ has:
\begin{itemize}
\item[i)] in-degree $k-1$ if and only if it its label is left-semi-symmetric; otherwise it has
    in-degree $k$;
\item[ii)] out-degree $k-1$ if and only if it its label is right-semi-symmetric; otherwise it
    has out-degree $k$.
\end{itemize}
\end{lemma}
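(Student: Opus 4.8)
The plan is to analyse how the deletion of symmetric $(n+1)$-tuples from $B_k(n)$ affects the degrees of a fixed vertex $\mathbf{v}=(v_0,\ldots,v_{n-1})$. In the full de Bruijn digraph this vertex has out-degree exactly $k$, its out-edges being labelled by the $(n+1)$-tuples $(v_0,\ldots,v_{n-1},c)$ as $c$ ranges over $\mathbb{Z}_k$, and in-degree exactly $k$, its in-edges labelled by $(b,v_0,\ldots,v_{n-1})$ as $b$ ranges over $\mathbb{Z}_k$. So I would, for each of these two families of $k$ labels, count how many are symmetric; the degree of $\mathbf{v}$ in $B_k^*(n)$ is then $k$ minus that count.

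For the out-degree I would write out the symmetry condition $a_i=a_{n-i}$ on the label $(v_0,\ldots,v_{n-1},c)$ coordinate-by-coordinate. The extremal comparison of position $0$ with position $n$ gives $c=v_0$, while the comparisons of positions $i$ and $n-i$ for $1\le i\le n-1$ give $v_i=v_{n-i}$ for all such $i$, which is exactly the condition that $\mathbf{v}$ is right-semi-symmetric. The key observation is that these constraints decouple: the conditions on $v_1,\ldots,v_{n-1}$ do not involve $c$ at all. Hence if $\mathbf{v}$ is not right-semi-symmetric, no value of $c$ yields a symmetric label and the out-degree remains $k$; if $\mathbf{v}$ is right-semi-symmetric, then precisely the single value $c=v_0$ yields a symmetric label, so exactly one out-edge is deleted and the out-degree drops to $k-1$. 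This establishes part (ii).

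Part (i) is entirely analogous, applied to the in-edge labels $(b,v_0,\ldots,v_{n-1})$: the extremal comparison forces $b=v_{n-1}$, and the remaining comparisons force $(v_0,\ldots,v_{n-2})$ to be symmetric, i.e.\ $\mathbf{v}$ left-semi-symmetric. Exactly one in-edge, the one with $b=v_{n-1}$, is then deleted precisely when $\mathbf{v}$ is left-semi-symmetric, giving in-degree $k-1$, and otherwise in-degree $k$.

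The argument is essentially index bookkeeping, so the only real care needed---the step I would treat as the main obstacle---is checking that the ranges match up cleanly, namely that the middle symmetry comparisons reduce exactly to the stated definitions (that $v_i=v_{n-i}$ for $1\le i\le n-1$ is equivalent to $(v_1,\ldots,v_{n-1})$ being symmetric, and the analogous statement on the left), and that in the semi-symmetric case the forced value of the free symbol genuinely produces a single symmetric label rather than none. I would verify the boundary behaviour for the smallest case $n=3$ explicitly to confirm the decoupling persists there.
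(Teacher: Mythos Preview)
Your proposal is correct and takes essentially the same approach as the paper: both start from the fact that every vertex has in- and out-degree $k$ in $B_k(n)$ and then count how many of the $k$ incident edge labels are symmetric, observing that this happens for exactly one choice of the free symbol precisely when the vertex label is (left- or right-) semi-symmetric. Your version simply spells out the index bookkeeping more explicitly than the paper's brief proof.
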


\begin{proof}
For (i), the in-degree of every vertex in $B_k(n)$ is $k$.  However, if (and only if) an inbound
edge corresponds to a symmetric $(n+1)$-tuple, then this edge will not be in $B_k^*(n)$.  Such an
event can occur if and only if the vertex is labelled with a left-semi-symmetric $n$-tuple, and
there can only be one such symmetric inbound edge.  The result follows.  The proof of (ii) follows
using an exactly analogous argument.
\end{proof}

This immediately tells us that some edges in $B^*_k(n-1)$ cannot occur in $B(S,n)$ if $S$ is a
$\mathcal{OS}_k(n)$.  However, before describing exactly when this occurs, we first need the
following simple result.

\begin{lemma}  \label{lemma:combining_left_right-semi-symmetric}
Suppose $k\geq2$ and $n\geq4$.  Suppose the $(n-1)$-tuple $(a_0,a_1,\ldots,a_{n-2})$ is both
left-semi-symmetric and right-semi-symmetric.  Then
\begin{itemize}
\item[i)] if $n$ is even then $(a_0,a_1,\ldots,a_{n-2})$ is uniform;
\item[ii)] if $n$ is odd then $(a_0,a_1,\ldots,a_{n-2})$ is either uniform or alternating.
\end{itemize}
\end{lemma}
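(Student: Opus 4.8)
The plan is to write out the two constraints—left-semi-symmetry and right-semi-symmetry—as explicit index equations on the $(n-1)$-tuple $\mathbf{a}=(a_0,a_1,\ldots,a_{n-2})$, and then compose them to generate a single permutation of the index set whose orbit structure forces the claimed uniform/alternating conclusion. First I would record that left-semi-symmetry of $\mathbf{a}$ means $a_i=a_{n-i-2}$ for $0\le i\le n-2$ (equivalently $(a_0,\ldots,a_{n-3})$ is symmetric), and right-semi-symmetry means $a_i=a_{n-i}$ for $1\le i\le n-1$ restricted to the available indices, i.e.\ $(a_1,\ldots,a_{n-2})$ is symmetric. The key observation is that each condition identifies $a_i$ with $a_{\sigma(i)}$ and $a_{\tau(i)}$ for two reflections $\sigma(i)=n-2-i$ and $\tau(i)=n-i$ (reindexed to the valid range), and composing these two reflections yields a \emph{translation} by a fixed shift on the indices.

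The heart of the argument is a parity/orbit computation. Composing the two reflections, $\tau\circ\sigma$ sends $i\mapsto i+2$ (up to the boundary behaviour), so iterating the two symmetry relations forces $a_i=a_{i+2}$ wherever both are defined; this is exactly the statement that all even-indexed entries are equal to one another and all odd-indexed entries are equal to one another. I would then split on the parity of $n$: the length of $\mathbf{a}$ is $n-1$, so when $n$ is even the tuple has odd length $n-1$, and the boundary reflection links the even and odd index classes together, collapsing everything to a single value (uniform); when $n$ is odd the tuple has even length $n-1$, the even and odd classes remain separate, and one obtains $a_{2i}=c_0$, $a_{2i+1}=c_1$, i.e.\ either uniform (if $c_0=c_1$) or alternating. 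I would carry out the even case first, since it is the cleaner collapse, and use it to motivate why the odd case stops one step short of full uniformity.

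The main obstacle I anticipate is handling the boundary indices carefully so that the composed translation genuinely connects the two parity classes in the even case but not in the odd case; off-by-one errors in the ranges $0\le i\le n-2$ versus $1\le i\le n-1$ are where the argument can silently go wrong, and the distinction between the two cases rests entirely on whether the chain of identifications $a_0=a_{n-2}=a_2=a_{n-4}=\cdots$ wraps around to glue an even index to an odd one. Rather than invoking abstract permutation-group language, I would make this concrete by explicitly chasing the chain of equalities for small representative values of $i$ and showing it terminates at the appropriate endpoint dictated by the parity of $n-1$. A clean alternative, which I would use as a sanity check, is to note that left-semi-symmetry says $(a_0,\ldots,a_{n-3})$ is symmetric and right-semi-symmetry says $(a_1,\ldots,a_{n-2})$ is symmetric, so $\mathbf{a}$ is simultaneously a palindrome on its first $n-2$ entries and on its last $n-2$ entries; a tuple that is palindromic under two reflections whose centres differ by a half-integer is periodic with period dividing twice that offset, and the parity of the length then determines whether the period is $1$ (uniform) or $2$ (alternating or uniform). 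This periodicity viewpoint gives the result with minimal index bookkeeping and matches the two-case split exactly.
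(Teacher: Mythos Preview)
Your approach is essentially the same as the paper's: both write out the two reflection constraints, compose them to obtain $a_i=a_{i+2}$, and then split on the parity of $n$ to decide whether the even- and odd-index classes merge. One caution: your explicit formulas $a_i=a_{n-i-2}$ and $a_i=a_{n-i}$ are off by one for an $(n-1)$-tuple (the correct versions are $a_i=a_{n-3-i}$ and $a_i=a_{n-1-i}$, matching your parenthetical restatements), though the composite $i\mapsto i+2$ survives the slip since both reflections are shifted by the same amount.
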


\begin{proof}
Suppose the $(n-1)$-tuple $(a_0,a_1,\ldots,a_{n-2})$ is both left-semi-symmetric and
right-semi-symmetric.  Then $a_i=a_{n-i-3}$, $0\leq i\leq n-3$, and $a_i=a_{n-i-1}$, $1\leq i\leq
n-2$.  Since $n\geq4$ this implies that there exist constants $c_0$ and $c_1$ such that
$c_0=a_{2i}$, $0\leq 2i\leq n-2$, and $c_1=a_{2j+1}$, $0\leq 2j+1\leq n-2$.  Hence (ii) follows.

If $n$ is even then we have $a_{(n-2)/2}=a_{(n-2)/2-1}$, and hence $c_0=c_1$ and (i) follows.
\end{proof}

The following result follows immediately from Lemmas~\ref{lemma:vertex_degree_k-1} and
\ref{lemma:combining_left_right-semi-symmetric}.

\begin{corollary}  \label{corollary:left-right-in-out}
Suppose $k\geq2$ and $n\geq4$. Consider a vertex in $B^*_k(n-1)$ with label
$\mathbf{a}=(a_0,a_1,\ldots,a_{n-2})$, where $\mathbf{a}$ is non-uniform.
\begin{itemize}
\item[i)] if $n$ is even and $\mathbf{a}$ is left-semi-symmetric, then its in-degree is $k-1$
    and its out-degree is $k$;
\item[i)] if $n$ is even and $\mathbf{a}$ is right-semi-symmetric, then its out-degree is $k-1$
    and its in-degree is $k$;
\item[iii)] if $n$ is odd and $\mathbf{a}$ is left-semi-symmetric and non-alternating, then its
    in-degree is $k-1$ and its out-degree is $k$;
\item[iv)] if $n$ is odd and $\mathbf{a}$ is right-semi-symmetric and non-alternating, then its
    out-degree is $k-1$ and its in-degree is $k$;
\end{itemize}
\end{corollary}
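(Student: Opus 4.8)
The plan is to derive Corollary~\ref{corollary:left-right-in-out} purely as a bookkeeping exercise that combines the degree characterisation of Lemma~\ref{lemma:vertex_degree_k-1} with the rigidity statement of Lemma~\ref{lemma:combining_left_right-semi-symmetric}. The whole point is that a vertex of $B^*_k(n-1)$ has reduced in-degree exactly when its label $\mathbf{a}$ is left-semi-symmetric and reduced out-degree exactly when $\mathbf{a}$ is right-semi-symmetric; so to show that the in-degree and out-degree cannot \emph{both} drop to $k-1$ for a non-uniform label, it suffices to rule out the case where $\mathbf{a}$ is simultaneously left- and right-semi-symmetric. That is precisely what Lemma~\ref{lemma:combining_left_right-semi-symmetric} supplies.

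I would organise the argument case by case, following the four enumerated items, and treat the even and odd cases separately since the rigidity conclusion differs. For the even case (items i and ii), first I would invoke Lemma~\ref{lemma:vertex_degree_k-1}(i) applied at level $n-1$: a left-semi-symmetric label forces in-degree $k-1$. To pin the out-degree at $k$, I would argue by contradiction: if the out-degree were also $k-1$ then Lemma~\ref{lemma:vertex_degree_k-1}(ii) would make $\mathbf{a}$ right-semi-symmetric as well, whence Lemma~\ref{lemma:combining_left_right-semi-symmetric}(i) would force $\mathbf{a}$ to be uniform, contradicting the standing hypothesis that $\mathbf{a}$ is non-uniform. Hence the out-degree must equal $k$, establishing item i; item ii is the mirror image, obtained by swapping the roles of in- and out-degree and of left- and right-semi-symmetry.

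For the odd case (items iii and iv) the structure is identical, but the extra hypothesis that $\mathbf{a}$ is \emph{non-alternating} becomes essential. Here Lemma~\ref{lemma:combining_left_right-semi-symmetric}(ii) only guarantees that a label which is both left- and right-semi-symmetric is uniform \emph{or} alternating. So, supposing for contradiction that a left-semi-symmetric non-alternating $\mathbf{a}$ also had out-degree $k-1$, Lemma~\ref{lemma:vertex_degree_k-1}(ii) would make it right-semi-symmetric, and then the odd part of Lemma~\ref{lemma:combining_left_right-semi-symmetric} would force $\mathbf{a}$ to be uniform or alternating --- both excluded by the hypotheses --- giving the contradiction. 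I would note one small indexing caveat: the lemmas must be applied at the correct window length, reading $n-1$ in the corollary as the $n$ of Lemma~\ref{lemma:vertex_degree_k-1} and the length-$(n-1)$-tuple hypothesis of Lemma~\ref{lemma:combining_left_right-semi-symmetric}; keeping these shifts straight is the only place an error could creep in.

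I do not anticipate a genuine mathematical obstacle here, since both supporting lemmas are already proved and the corollary is explicitly flagged as following \emph{immediately} from them. The only real work is the clean contradiction argument that excludes simultaneous left- and right-semi-symmetry, and the main thing to be careful about is the alignment of indices between the two lemmas and the precise non-uniform / non-alternating side conditions, so that the cited rigidity conclusion genuinely contradicts the standing hypotheses in each of the four cases.
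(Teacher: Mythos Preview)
Your proposal is correct and follows exactly the route the paper intends: the paper's proof simply states that the corollary follows immediately from Lemmas~\ref{lemma:vertex_degree_k-1} and \ref{lemma:combining_left_right-semi-symmetric}, and you have spelled out precisely how those two lemmas combine via the contradiction argument on simultaneous left- and right-semi-symmetry. Your indexing caveat is well placed and the parity split (uniform versus uniform-or-alternating) aligns correctly with the even/odd cases of Lemma~\ref{lemma:combining_left_right-semi-symmetric}.
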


The above corollary immediately tells us that certain edges in $B^*_k(n-1)$ cannot occur in
$B(S,n)$, as follows.

\begin{corollary}  \label{corollary:excluded_tuples_by_unequal_degree}
Suppose $k\geq2$ and $n\geq4$ and $S$ is an $\mathcal{OS}_k(n)$. Then
\begin{itemize}
\item[i)] if $n$ is even, for every vertex in $B^*_k(n-1)$ corresponding to a non-uniform
    left-semi-symmetric $(n-1)$-tuple $(a_0,a_1,\ldots,a_{n-2})$, there is an edge
    $(a_0,a_1,\ldots,a_{n-2},x)$ in $B^*_k(n-1)$, for some $x$, that is not in $B(S,n)$;
\item[ii)] if $n$ is even, for every vertex in $B^*_k(n-1)$ corresponding to a non-uniform
    right-semi-symmetric $(n-1)$-tuple $(a_0,a_1,\ldots,a_{n-2})$, there is an edge
    $(y,a_0,a_1,\ldots,a_{n-2})$ in $B^*_k(n-1)$, for some $y$, that is not in $B(S,n)$;
\item[iii)] if $n$ is odd, for every vertex in $B^*_k(n-1)$ corresponding to a non-uniform
    non-alternating left-semi-symmetric $(n-1)$-tuple $(a_0,a_1,\ldots,a_{n-2})$, there is an
    edge $(a_0,a_1,\ldots,a_{n-2},x)$ in $B^*_k(n-1)$, for some $x$, that is not in $B(S,n)$;
\item[iv)] if $n$ is odd, for every vertex in $B^*_k(n-1)$ corresponding to a non-uniform
    non-alternating right-semi-symmetric $(n-1)$-tuple $(a_0,a_1,\ldots,a_{n-2})$, there is an
    edge $(y,a_0,a_1,\ldots,a_{n-2})$ in $B^*_k(n-1)$, for some $y$, that is not in $B(S,n)$.
\end{itemize}
\end{corollary}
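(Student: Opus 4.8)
The plan is to derive Corollary~\ref{corollary:excluded_tuples_by_unequal_degree} directly from Corollary~\ref{corollary:left-right-in-out} together with part~(ii) of Lemma~\ref{lemma:BSn_basic_properties}, which states that every vertex of $B(S,n)$ has in-degree equal to its out-degree. The essential observation is that the two earlier results, when combined, produce a degree imbalance in $B^*_k(n-1)$ that $B(S,n)$ cannot inherit. Concretely, I would fix a vertex with a non-uniform label $\mathbf{a}=(a_0,a_1,\ldots,a_{n-2})$ satisfying the hypotheses of the relevant case (for instance, in case~(i), $n$ even and $\mathbf{a}$ left-semi-symmetric), and apply Corollary~\ref{corollary:left-right-in-out} to conclude that in $B^*_k(n-1)$ this vertex has in-degree $k-1$ and out-degree $k$.

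The next step is to argue by contradiction. Suppose that \emph{every} out-edge of this vertex in $B^*_k(n-1)$ --- that is, every edge of the form $(a_0,a_1,\ldots,a_{n-2},x)$ present in $B^*_k(n-1)$ --- also lies in $B(S,n)$. Since $B(S,n)$ is a subgraph of $B^*_k(n-1)$ by Lemma~\ref{lemma:BSn_basic_properties}(iii), the in-edges and out-edges of the vertex in $B(S,n)$ are each subsets of those in $B^*_k(n-1)$. The vertex has at most $k-1$ in-edges available in $B^*_k(n-1)$, hence at most $k-1$ in $B(S,n)$; but under the supposition it retains all $k$ of its out-edges. This forces out-degree $k$ and in-degree at most $k-1$ in $B(S,n)$, contradicting Lemma~\ref{lemma:BSn_basic_properties}(ii). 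Therefore at least one out-edge $(a_0,a_1,\ldots,a_{n-2},x)$ of $B^*_k(n-1)$ must be absent from $B(S,n)$, which is exactly the assertion of part~(i). Parts~(ii), (iii) and~(iv) follow by the symmetric arguments, swapping the roles of in- and out-degree and, for the odd cases, adding the non-alternating hypothesis so that Corollary~\ref{corollary:left-right-in-out}(iii)--(iv) applies.

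I expect this to be almost entirely a matter of bookkeeping rather than genuine difficulty, since all the combinatorial content has been established in the preceding lemmas and corollary. The one point requiring mild care is making the in/out-edge counting precise: one must note that the degree of the vertex \emph{within} $B(S,n)$ is bounded above by its degree within $B^*_k(n-1)$ (an immediate consequence of the subgraph relation), and that the degree equality of part~(ii) of Lemma~\ref{lemma:BSn_basic_properties} is what converts the $k-1$ versus $k$ gap into a forced missing edge. The main subtlety worth flagging is purely one of matching cases correctly to hypotheses --- ensuring that the even cases invoke Corollary~\ref{corollary:left-right-in-out}(i)--(ii) while the odd cases invoke (iii)--(iv) with the extra non-alternating condition --- so that no vertex is inadvertently assigned the wrong degree profile.
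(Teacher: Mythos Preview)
Your proposal is correct and follows essentially the same approach as the paper, which simply states that the result follows immediately from Lemma~\ref{lemma:BSn_basic_properties}(ii) and Corollary~\ref{corollary:left-right-in-out}. You have spelled out the details of this implication carefully (including the explicit use of Lemma~\ref{lemma:BSn_basic_properties}(iii) for the subgraph relation), but the underlying argument is the same.
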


\begin{proof}
The result follows immediately from Lemma~\ref{lemma:BSn_basic_properties}(ii) and
Corollary~\ref{corollary:left-right-in-out}.
\end{proof}

\subsection{Degree-parity constraints on the sequence-subgraph}  \label{subsection:parity-constraints}

We first give the following simple lemma.

\begin{lemma}  \label{lemma:semi-symmetric_not_symmetric}
Suppose $k\geq2$ and $n\geq2$.  If an $n$-tuple is both left-semi-symmetric and symmetric then it
is uniform.  Similarly, if an $n$-tuple is both right-semi-symmetric and symmetric then it is
uniform.
\end{lemma}

\begin{proof}
Let $\mathbf a=(a_0,\dots,a_{n-1})$ be an $n$-tuple which is both left-semi-symmetric and
symmetric. Then $a_i=a_{n-2-i}=a_{n-1-i}$ for $i=0,1,\dots,n-2$. It is immediate that $a_j=a_{j+1}$
for $j=0,1,\dots,n-2$ and thus $\mathbf a$ is uniform. The second claim follows by an analogous
argument.
\end{proof}

The following lemma is key.

\begin{lemma}  \label{lemma:symmetric_vertex_even_degree}
Suppose $k\geq2$ and $n\geq2$.  Suppose $S$ is an $\mathcal{OS}_k(n)$ and consider a vertex in
$B(S,n)$ with label $\mathbf{a}=(a_0,a_1,\ldots,a_{n-2})$, where $\mathbf{a}$ is symmetric.  Then
$\mathbf{a}$ has even in-degree and even out-degree in $B(S,n)$.
\end{lemma}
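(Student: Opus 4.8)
The plan is to exploit the decomposition of $B(S,n)$ into two edge-disjoint circuits, one coming from $S$ and one from $S^R$, together with the fact that a symmetric vertex is fixed under reversal. First I would record the basic observation that, since $\mathbf{a}=(a_0,\ldots,a_{n-2})$ is symmetric, $\mathbf{a}^R=\mathbf{a}$, so reversal fixes this vertex. I would then split the edges of $B(S,n)$ incident to $\mathbf{a}$ according to whether they arise from $S$ or from $S^R$, writing $E_S$ and $E_{S^R}$ for the corresponding edge sets. As noted in the proof of Lemma~\ref{lemma:BSn_basic_properties}, the $n$-tuples of $S$ trace a closed walk using each edge of $E_S$ exactly once, and likewise for $S^R$; consequently $\mathbf{a}$ has equal in- and out-degree within $E_S$, and equal in- and out-degree within $E_{S^R}$. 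Because $S$ is an $\mathcal{OS}_k(n)$, no $n$-tuple of $S$ is the reverse of an $n$-tuple of $S$, so $E_S$ and $E_{S^R}$ are disjoint and the in- and out-degrees of $\mathbf{a}$ in $B(S,n)$ are the sums of the corresponding degrees in $E_S$ and in $E_{S^R}$.

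The key step is to relate these two contributions by reversal. Writing $\mathbf{u}$ for an edge of $E_S$ leaving $\mathbf{a}$, its reverse $\mathbf{u}^R$ lies in $E_{S^R}$, and by the earlier observation that $\mathbf{u}^R$ denotes the edge $(\mathbf{b}^R,\mathbf{a}^R)$ when $\mathbf{u}$ denotes $(\mathbf{a},\mathbf{b})$, the edge $\mathbf{u}^R$ enters $\mathbf{a}^R=\mathbf{a}$. Thus reversal is a bijection from the edges of $E_S$ leaving $\mathbf{a}$ to the edges of $E_{S^R}$ entering $\mathbf{a}$, so the out-degree of $\mathbf{a}$ in $E_S$ equals its in-degree in $E_{S^R}$; symmetrically, its in-degree in $E_S$ equals its out-degree in $E_{S^R}$. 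Combining these with the balance of $\mathbf{a}$ within each circuit shows that the out-degree of $\mathbf{a}$ in $E_S$, its in-degree in $E_{S^R}$, and its out-degree in $E_{S^R}$ all coincide, so the total out-degree of $\mathbf{a}$ in $B(S,n)$ equals twice its out-degree in $E_S$ and is therefore even; the same chain of equalities applied to in-degrees gives even in-degree.

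The main obstacle is the incidence bookkeeping in the middle step: one must check carefully that reversal carries an edge leaving the symmetric vertex $\mathbf{a}$ to an edge entering the \emph{same} vertex $\mathbf{a}$ rather than some other vertex, and this is exactly where the symmetry hypothesis $\mathbf{a}^R=\mathbf{a}$ is used. The remaining ingredients — the balance of each vertex within each of the two circuits and the disjointness of $E_S$ and $E_{S^R}$ forced by orientability — are already available from Lemma~\ref{lemma:BSn_basic_properties} and the definition of an $\mathcal{OS}_k(n)$, so once the incidence claim is established the parity conclusion follows immediately.
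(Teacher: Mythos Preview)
Your proposal is correct and follows essentially the same approach as the paper: decompose the edges of $B(S,n)$ into the two edge-disjoint circuits coming from $S$ and $S^R$, and use the fact that a symmetric vertex is fixed under reversal to conclude that the two circuits contribute equally to its degrees. The paper's proof is terser, merely asserting that ``both circuits pass through this vertex equally many times,'' whereas you spell out explicitly the reversal bijection between outgoing edges in $E_S$ and incoming edges in $E_{S^R}$ that justifies this; your version is more detailed but not genuinely different.
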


\begin{proof}
Both $S$ and $S^R$ correspond to an Eulerian circuit in $B(S,n)$, and these circuits are edge
disjoint and cover all the edges of $B(S,n)$. If $\mathbf{a}$ is symmetric then both circuits pass
through this vertex equally many times. It follows that $\mathbf{a}$ has even in-degree and even
out-degree in $B(S,n)$.
\end{proof}

We also need the following.

\begin{lemma}  \label{lemma:vertex-parity}
Suppose $k\geq2$ and $n\geq2$, and consider a vertex in $B^*_k(n-1)$ with label
$\mathbf{a}=(a_0,a_1,\ldots,a_{n-2})$.  Then:
\begin{itemize}
\item[i)] if $k$ is even and $\mathbf{a}$ is uniform than the vertex has odd in-degree in
    $B^*_k(n-1)$ (and out-degree);
\item[ii)] if $k$ is odd and $\mathbf{a}$ is symmetric and non-uniform then the vertex has odd
    in-degree in $B^*_k(n-1)$ (and out-degree).
\end{itemize}
\end{lemma}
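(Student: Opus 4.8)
The plan is to count, for a vertex $\mathbf{a}=(a_0,\dots,a_{n-2})$ in $B^*_k(n-1)$, exactly how many of its $k$ inbound edges in the full de Bruijn graph $B_k(n-1)$ are removed when we pass to the starred subgraph, and then to reduce the resulting in-degree modulo $2$. Since every vertex of $B_k(n-1)$ has in-degree exactly $k$, and an edge is removed precisely when its label is a symmetric $n$-tuple, the in-degree in $B^*_k(n-1)$ is $k$ minus the number of symmetric inbound edges. An inbound edge has label $(y,a_0,a_1,\dots,a_{n-2})$ for some $y\in\mathbb{Z}_k$, so the task is to count the values of $y$ for which this $n$-tuple is symmetric. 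By Lemma~\ref{lemma:vertex_degree_k-1} there is at most one such $y$ (occurring exactly when $\mathbf{a}$ is left-semi-symmetric), so the in-degree is either $k$ or $k-1$; the out-degree calculation is identical by the analogous argument, using right-semi-symmetry.

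With this reduction in hand, both parts become parity bookkeeping. For part (i), suppose $k$ is even and $\mathbf{a}$ is uniform, say $\mathbf{a}=(c,c,\dots,c)$. Then $\mathbf{a}$ is trivially both left- and right-semi-symmetric (indeed symmetric), so by Lemma~\ref{lemma:vertex_degree_k-1} the in-degree in $B^*_k(n-1)$ is $k-1$, which is odd since $k$ is even; the out-degree is likewise $k-1$. For part (ii), suppose $k$ is odd and $\mathbf{a}$ is symmetric and non-uniform. I first observe that a symmetric $\mathbf{a}$ cannot be left-semi-symmetric unless it is uniform: this is exactly the contrapositive of Lemma~\ref{lemma:semi-symmetric_not_symmetric}, which asserts that an $(n-1)$-tuple which is both left-semi-symmetric and symmetric must be uniform. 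Hence a non-uniform symmetric $\mathbf{a}$ is \emph{not} left-semi-symmetric, so by Lemma~\ref{lemma:vertex_degree_k-1} its in-degree in $B^*_k(n-1)$ is the full $k$, which is odd; the same argument with right-semi-symmetry gives out-degree $k$, again odd.

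The one place requiring genuine care, rather than pure substitution, is the non-uniform case in part (ii), where one must rule out the vertex being semi-symmetric so as to conclude that no inbound (respectively outbound) edge is symmetric and removed. This is precisely where Lemma~\ref{lemma:semi-symmetric_not_symmetric} does the work: without it one could not exclude the possibility that a symmetric, non-uniform label simultaneously satisfies the semi-symmetry condition and loses an edge, which would flip the parity from $k$ to $k-1$. Everything else is a direct appeal to the vertex-degree characterisation of Lemma~\ref{lemma:vertex_degree_k-1} together with the parity of $k$. I would therefore organise the proof as: (1) recall that in-degree in $B^*_k(n-1)$ is $k$ or $k-1$ according to whether $\mathbf{a}$ is left-semi-symmetric, via Lemma~\ref{lemma:vertex_degree_k-1}(i); (2) dispatch the uniform/even-$k$ case, where the degree is $k-1$ and hence odd; (3) invoke Lemma~\ref{lemma:semi-symmetric_not_symmetric} to show a non-uniform symmetric tuple is not semi-symmetric, forcing degree $k$, which is odd when $k$ is odd; and (4) note that the out-degree arguments are verbatim identical after replacing ``left'' by ``right''.
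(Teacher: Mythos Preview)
Your proof is correct and follows essentially the same approach as the paper: both argue that a uniform $\mathbf{a}$ is left- and right-semi-symmetric, giving degree $k-1$ (odd for even $k$), and that a symmetric non-uniform $\mathbf{a}$ cannot be left- or right-semi-symmetric by Lemma~\ref{lemma:semi-symmetric_not_symmetric}, giving degree $k$ (odd for odd $k$). Your write-up is somewhat more expansive in motivating the degree dichotomy, but the logical structure and the lemmas invoked are identical.
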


\begin{proof}
\begin{itemize}
\item[i)] Since $a$ is uniform it is also left-semi-symmetric and right-semi-symmetric, and
    hence by Lemma~\ref{lemma:vertex_degree_k-1} it has in-degree and out-degree $k-1$.  Since
    $k$ is even the result follows.
\item[ii)] Since $\mathbf{a}$ is symmetric and non-uniform it cannot be left-semi-symmetric or
    right-semi-symmetric by Lemma~\ref{lemma:semi-symmetric_not_symmetric}.  Hence $\mathbf{a}$
    has in-degree and out-degree $k$ and the result follows since $k$ is odd.
\end{itemize}
\end{proof}

Combining Lemmas~\ref{lemma:symmetric_vertex_even_degree} and \ref{lemma:vertex-parity} immediately
gives the following important result.

\begin{corollary}  \label{corollary:excluded-tuples-by-parity}
Suppose $k\geq2$ and $n\geq2$ and $S$ is an $\mathcal{OS}_k(n)$. Then
\begin{itemize}
\item[i)] if $k$ is even, for every vertex in $B^*_k(n-1)$ corresponding to a uniform
    $(n-1)$-tuple $(a,a,...,a)$, there is an edge $(a,a,...,a,x)$ in $B^*_k(n-1)$, for some
    $x$, that is not in $B(S,n)$.
\item[ii)] if $k$ is even, for every vertex in $B^*_k(n-1)$ corresponding to a uniform
    $(n-1)$-tuple $(a,a,...,a)$, there is an edge $(y,a,a,...,a)$ in $B^*_k(n-1)$, for some
    $y$, that is not in $B(S,n)$.
\item[iii)] if $k$ is odd, for every vertex in $B^*_k(n-1)$ corresponding to a symmetric and
    non-uniform $(n-1)$-tuple $(a_0,a_1,\ldots,a_{n-2})$, there is an edge
    $(a_0,a_1,\ldots,a_{n-2},x)$ in $B^*_k(n-1)$, for some $x$, that is not in $B(S,n)$.
\item[iv)] if $k$ is odd, for every vertex in $B^*_k(n-1)$ corresponding to a symmetric and
    non-uniform $(n-1)$-tuple $(a_0,a_1,\ldots,a_{n-2})$, there is an edge
    $(y,a_0,a_1,\ldots,a_{n-2})$ in $B^*_k(n-1)$, for some $y$, that is not in $B(S,n)$.
\end{itemize}
\end{corollary}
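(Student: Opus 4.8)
The plan is to prove the statement as a clean parity argument, exactly as the surrounding text suggests: the two invoked lemmas assign \emph{opposite} parities to the degree of the relevant vertex in the two graphs $B^*_k(n-1)$ and $B(S,n)$, and since the latter sits inside the former, a parity clash forces a strictly smaller degree and hence a missing edge. The first thing I would record is that, by the definition of $B(S,n)$, every vertex of $B_k(n-1)$ is a vertex of $B(S,n)$, so the statements below apply to every vertex of the stated type (even if it happens to have degree $0$, which is still even). Second, by Lemma~\ref{lemma:BSn_basic_properties}(iii), $B(S,n)$ is a subgraph of $B^*_k(n-1)$, so for any vertex its in-degree (resp.\ out-degree) in $B(S,n)$ is at most its in-degree (resp.\ out-degree) in $B^*_k(n-1)$.

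For parts (i) and (ii), assume $k$ is even and let $\mathbf{a}=(a,a,\ldots,a)$ be a uniform $(n-1)$-tuple. I would first observe that a uniform tuple is in particular symmetric, so Lemma~\ref{lemma:symmetric_vertex_even_degree} applies and tells us that $\mathbf{a}$ has even in-degree and even out-degree in $B(S,n)$. On the other hand, Lemma~\ref{lemma:vertex-parity}(i) gives that $\mathbf{a}$ has odd out-degree (and odd in-degree) in $B^*_k(n-1)$. Combining these with the subgraph inequality from the previous paragraph, the out-degree of $\mathbf{a}$ in $B(S,n)$ is an even integer that is at most the odd out-degree of $\mathbf{a}$ in $B^*_k(n-1)$; hence it is \emph{strictly} smaller, so at least one outbound edge of $\mathbf{a}$ present in $B^*_k(n-1)$ is absent from $B(S,n)$. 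Such an edge necessarily has the form $(a,a,\ldots,a,x)$ for some $x$, giving (i); applying the identical reasoning to in-degrees gives an absent inbound edge $(y,a,a,\ldots,a)$, establishing (ii).

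For parts (iii) and (iv), assume $k$ is odd and let $\mathbf{a}=(a_0,a_1,\ldots,a_{n-2})$ be symmetric and non-uniform. Since $\mathbf{a}$ is symmetric, Lemma~\ref{lemma:symmetric_vertex_even_degree} again gives even in-degree and even out-degree in $B(S,n)$, while Lemma~\ref{lemma:vertex-parity}(ii) gives odd in-degree and odd out-degree in $B^*_k(n-1)$. The same even-versus-odd comparison under the subgraph inequality forces a strictly smaller out-degree in $B(S,n)$, producing an absent outbound edge $(a_0,a_1,\ldots,a_{n-2},x)$ for (iii), and symmetrically an absent inbound edge $(y,a_0,a_1,\ldots,a_{n-2})$ for (iv).

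There is no genuinely hard step here; the only points requiring a moment of care are the two bookkeeping observations that feed the parity clash, namely that a uniform tuple qualifies as symmetric (so that Lemma~\ref{lemma:symmetric_vertex_even_degree} is legitimately applicable in parts (i) and (ii)), and that the subgraph relation yields a genuine inequality $\mathrm{deg}_{B(S,n)}(\mathbf{a})\le \mathrm{deg}_{B^*_k(n-1)}(\mathbf{a})$ rather than merely a comparison of parities. Once these are in place, the word ``strictly'' in ``strictly smaller'' does all the work, since an even number cannot equal the odd upper bound and therefore must fall below it by at least one.
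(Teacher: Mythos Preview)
Your proof is correct and follows exactly the approach the paper intends: the corollary is stated as an immediate consequence of combining Lemmas~\ref{lemma:symmetric_vertex_even_degree} and \ref{lemma:vertex-parity}, and your write-up simply makes explicit the parity clash and the subgraph inequality (from Lemma~\ref{lemma:BSn_basic_properties}(iii)) that underlie that word ``immediate''. The only extra content you supply beyond the paper's one-line justification is the bookkeeping remark that uniform tuples are symmetric, which is a welcome clarification rather than a departure.
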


\begin{remark}
Observe that, for $n=2$ and $k$ odd, Lemma~\ref{lemma:vertex-parity} and
Corollary~\ref{corollary:excluded-tuples-by-parity} are trivially true since there are no symmetric
non-uniform $(n-1)$-tuples.
\end{remark}

\subsection{Constraint interactions}  \label{subsection:constraint-interactions}

The results above indicate in what circumstances certain edges in $B^*_k(n-1)$ cannot occur in
$B(S,n)$ if $S$ is an $\mathcal{OS}_k(n)$.  In particular we have considered cases where one of the
incoming edges to a vertex corresponding to a symmetric or right-semi-symmetric $(n-1)$-tuple
cannot occur in $B(S,n)$, and also where one of the outgoing edges from a vertex corresponding to a
symmetric or left-semi-symmetric $(n-1)$-tuple cannot occur in $B(S,n)$. While we would like to add
together the numbers of eliminated edges for each case, we need to ensure we avoid `double
counting'.

More specifically we need to consider when an edge can be outgoing from a symmetric or
left-semi-symmetric $(n-1)$-tuple and also incoming to a symmetric or right-semi-symmetric
$(n-1)$-tuple.  This motivates the following result.

\begin{lemma}  \label{lemma:constraint-interactions}
Suppose $k\geq2$.  Suppose $\mathbf{a}=(a_0,a_1,\dots,a_{n-2})$ and
$\mathbf{b}=(a_1,a_2,\dots,a_{n-1})$ are $k$-ary $(n-1)$-tuples, connected by the edge
$(a_0,a_1,\dots,a_{n-1})$.
\begin{itemize}
\item[i)] If $n\geq3$, $\mathbf{a}$ is symmetric and $\mathbf{b}$ is symmetric then either
    $\mathbf{a}$ and $\mathbf{b}$ are both uniform or $n$ is even and $\mathbf{a}$ and
    $\mathbf{b}$ are both alternating.
\item[ii)] If $n\geq5$, $\mathbf{a}$ is symmetric and $\mathbf{b}$ is right-semi-symmetric then
    there exist $c_j$, $0\leq j\leq2$, such that $a_{3i+j}=c_j$, for every $i$ and $0\leq j\leq
    2$.  Moreover if $n\equiv0\pmod3$ then $c_0=c_1$, if $n\equiv1\pmod3$ then $c_0=c_2$, and
    if $n\equiv2\pmod3$ then $c_1=c_2$.
\item[iii)] If $n\geq5$, $\mathbf{a}$ is left-semi-symmetric and $\mathbf{b}$ is symmetric then
    there exist $c_j$, $0\leq j\leq2$, such that $a_{3i+j}=c_j$, for every $i$ and $0\leq j\leq
    2$.  Moreover if $n\equiv0\pmod3$ then $c_1=c_2$, if $n\equiv1\pmod3$ then $c_0=c_1$, and
    if $n\equiv2\pmod3$ then $c_0=c_2$.
\item[iv)] If $n\geq5$, $\mathbf{a}$ is left-semi-symmetric and $\mathbf{b}$ is
    right-semi-symmetric then there exist $c_j$, $0\leq j\leq3$, such that $a_{4i+j}=c_j$, for
    every $i$ and $0\leq j\leq 3$. Moreover if $n\equiv0\pmod4$ then $c_0=c_1$ and $c_2=c_3$,
    if $n\equiv1\pmod4$ then $c_0=c_2$, if $n\equiv2\pmod4$ then $c_0=c_3$ and $c_1=c_2$, and
    if $n\equiv3\pmod4$ then $c_1=c_3$.
\end{itemize}
\end{lemma}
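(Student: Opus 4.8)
The plan is to read each hypothesis as invariance of the sequence $(a_0,\ldots,a_{n-1})$ under a reflection of the index set, and to exploit the fact that the composition of two reflections is a translation. Unfolding the definitions for $(n-1)$-tuples, ``$\mathbf a$ symmetric'' is the identity $a_i=a_{n-2-i}$ $(0\le i\le n-2)$, i.e.\ invariance under the reflection $i\mapsto(n-2)-i$; ``$\mathbf b$ symmetric'' is $a_i=a_{n-i}$ $(1\le i\le n-1)$, the reflection $i\mapsto n-i$; ``$\mathbf a$ left-semi-symmetric'' is $a_i=a_{n-3-i}$ $(0\le i\le n-3)$, the reflection $i\mapsto(n-3)-i$; and ``$\mathbf b$ right-semi-symmetric'' is $a_i=a_{n+1-i}$ $(2\le i\le n-1)$, the reflection $i\mapsto(n+1)-i$. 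Writing each such reflection as $i\mapsto\beta-i$, the composition of the two reflections occurring in a given part is the translation $i\mapsto i+(\beta_2-\beta_1)$, where $\beta_2-\beta_1$ equals $2$ in (i), $3$ in (ii) and (iii), and $4$ in (iv); this difference is exactly the period $p$ appearing in the conclusion.

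First I would record the two index identities precisely, keeping careful track of the ranges on which each is valid. Composing them yields $a_i=a_{i+p}$ on a range of the form $0\le i\le n-1-p$. For $n\ge5$ (and $n\ge3$ in part (i)) this range is long enough that, within each residue class modulo $p$, every consecutive pair of indices of $\{0,\ldots,n-1\}$ is linked; hence $a$ is constant on each residue class mod $p$, giving constants $c_0,\ldots,c_{p-1}$ with $a_{pi+j}=c_j$. This is precisely the first assertion of (ii)--(iv). For (i) it already shows that the whole tuple $(a_0,\ldots,a_{n-1})$ has period $2$, so that $\mathbf a$ and $\mathbf b$ (each of which contains an even- and an odd-indexed position when $n\ge3$) are simultaneously uniform or simultaneously alternating; which of the two occurs is decided in the next step.

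To pin down the additional identifications among the $c_j$, I would then feed exactly one of the two reflections back in at the level of residue classes: invariance of $a$ under the reflection $i\mapsto\beta_1-i$ coming from the hypothesis on $\mathbf a$ forces $c_j=c_{(\beta_1-j)\bmod p}$ for every $j$. Substituting the relevant value of $\beta_1\bmod p$ -- namely $(n-2)\bmod2$ in (i), $(n-2)\bmod3$ in (ii), $(n-3)\bmod3$ in (iii) and $(n-3)\bmod4$ in (iv) -- and splitting into the residue classes of $n$ modulo $p$ then yields exactly the stated relations. In particular, in part (i) this reads: $n$ odd forces $c_0=c_1$, so the period-$2$ pattern collapses and $\mathbf a,\mathbf b$ are uniform, whereas $n$ even forces no identification and leaves them alternating or uniform, as claimed. (One may also apply the other reflection as a consistency check; it reproduces the same identifications.)

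I expect the only real obstacle to be the index bookkeeping rather than any conceptual difficulty. Specifically, one must verify that the valid ranges of the two reflections overlap enough that (a) the composed translation genuinely links all consecutive same-residue indices, so that no residue class is left unconstrained, and (b) the reflection used in the final step actually reaches a representative of each class it is meant to identify. These are exactly the points at which the hypothesis $n\ge5$ (rather than $n\ge4$) is needed in parts (ii)--(iv): for smaller $n$ the shortened ranges cause the reflections to over-identify the classes, and the clean description in terms of $n\bmod p$ fails. Once the identity $c_j=c_{(\beta_1-j)\bmod p}$ is established, the case analysis over the residues of $n$ modulo $p$ is entirely routine.
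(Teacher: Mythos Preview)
Your proposal is correct and follows essentially the same route as the paper: combine the two index identities to obtain periodicity $a_i=a_{i+p}$ (with $p=2,3,3,4$ in parts (i)--(iv)), and then revisit one of the original identities to extract the extra relations among the $c_j$ according to $n\bmod p$. Your packaging via ``two reflections compose to a translation'' together with the single formula $c_j=c_{(\beta_1-j)\bmod p}$ is a tidier, uniform way to organise exactly what the paper does by hand, namely evaluating the symmetry relations at a few specific indices (e.g.\ $a_0=a_{n-2}$, $a_2=a_{n-1}$) in each case.
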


\begin{proof}
\begin{itemize}
\item[i)] By symmetry of $\mathbf{a}$ and $\mathbf{b}$, respectively, $a_i=a_{n-2-i}$ and
    $a_{i+1}=a_{n-1-i}$ for every $i$, $0\leq i\leq n-2$.  Hence there exist $c_0$ and $c_1$
    such that $a_{2i}=c_0$ and $a_{2i+1}=c_1$ for every $i$.  If $n$ is odd then $c_0=c_1$ and
    the result follows.
\item[ii)] By symmetry of $\mathbf{a}$, $a_i=a_{n-2-i}$ for every $i$ ($0\leq i\leq n-2$), and
    by right-semi-symmetry of $\mathbf{b}$, $a_{j}=a_{n+1-j}$ for every $j$ ($2\leq j\leq
    n-1$). Hence $a_i=a_{i+3}$, $0\leq i< n-3$.  Now, since $n\geq5$, $a_{3i+j}=c_j$, for every
    $i$ and $0\leq j\leq 2$ for some $c_j$.  From symmetry of $\mathbf{a}$, we have
    $c_0=a_0=a_{n-2}$; if $n\equiv0\pmod3$ then $a_{n-2}=c_1$, i.e.\ $c_0=c_1$. From
    right-semi-symmetry of $\mathbf{b}$, $c_2=a_2=a_{n-1}$; if $n\equiv1\pmod3$ then
    $a_{n-1}=c_0$, and so $c_0=c_2$, and finally if $n\equiv2\pmod3$ then $a_{n-1}=c_1$, i.e.\
    $c_1=c_2$.
\item[iii)] By left-semi-symmetry of $\mathbf{a}$, $a_i=a_{n-3-i}$ for every $i$ ($0\leq i\leq
    n-3$), and by symmetry of $\mathbf{b}$, $a_j=a_{n-j}$ for every $j$ ($1\leq j\leq n-1$).
    Using exactly the same argument as for (ii), $a_{3i+j}=c_j$, for every $i$ and $0\leq j\leq
    2$ for some $c_j$. By the symmetry of $\mathbf{b}$, $c_1=a_1=a_{n-1}$; if $n\equiv0\pmod3$
    then $a_{n-1}=c_2$ (and so $c_1=c_2$), and if $n\equiv1\pmod3$ then $a_{n-1}=c_0$ and so
    $c_1=c_0$.  Finally, by left-semi-symmetry of $\mathbf{a}$, $c_0=a_0=a_{n-3}$, and hence if
    $n\equiv2\pmod3$ then $a_{n-3}=c_2$ and so $c_0=c_2$.
\item[iv)] By left-semi-symmetry of $\mathbf{a}$,  $a_i=a_{n-3-i}$ for every $i$ ($0\leq i\leq
    n-3$) and by right-semi-symmetry of $\mathbf{b}$, $a_{j}=a_{n+1-j}$ for every $j$ ($2\leq
    j\leq n-1$). Hence $a_i=a_{i+4}$, $0\leq i\leq n-3$.  Thus, since $n\geq5$, $a_{4i+j}=c_j$,
    for every $i$ and $0\leq j\leq 3$ for some $c_j$. From left-semi-symmetry of $\mathbf{a}$,
    $c_0=a_0=a_{n-3}$; if $n\equiv0\pmod4$ then $a_{n-3}=c_1$ and so $c_0=c_1$, if
    $n\equiv1\pmod4$ then $a_{n-3}=c_2$ and so $c_0=c_2$, and if $n\equiv2\pmod4$ then
    $a_{n-3}=c_3$ and so $c_0=c_3$. Further, we have $c_1=a_1=a_{n-4}$; if $n\equiv3\pmod4$
    then $a_{n-4}=c_3$ and so $c_1=c_3$. From right-semi-symmetry of $\mathbf{b}$, we have
    $c_2=a_2=a_{n-1}$; if $n\equiv0\pmod4$ then $a_{n-1}=c_3$ (and so $c_2=c_3$), and finally
    if $n\equiv2\pmod4$ then $a_{n-1}=c_1$, i.e.\ $c_1=c_2$.
\end{itemize}
\end{proof}

%\begin{remark}
%The following issues need further thought.
%\begin{itemize}
%\item When can an edge go from a left/right-semi-symmetric vertex to a symmetric vertex, and
%    vice versa? If right-semi-symmetric then of form $(x,u,u,\ldots,u)$ ? Left-semi-symmetric would imply it is uniform, i.e. a loop which cannot happen?
%\item When can an edge go from a left-semi-symmetric vertex to a left/right-semi-symmetric
%    vertex? Left to left - alternating or uniform? Left to right might be 'repeat after places'?
%\end{itemize}
%I suspect this matters when $n\geq5$.  For $n=4$ I think that just looking at in-degrees is as good
%as we can get.  However, for $n>4$ I think we can get better bounds by looking at outgoing as well
%as incoming edges; unfortunately, I fear it's going to get messy!
%\end{remark}

\subsection{Counting special types of $n$-tuple}

The following simple enumerative result will be of use below.

\begin{lemma}  \label{lemma:tuple_numbers}
Suppose $n\geq3$ and $k\geq2$.  Then
\begin{itemize}
\item[i)] the number of symmetric $k$-ary $n$-tuples is $k^{\lceil n/2\rceil}$;
\item[ii)] the number of uniform $k$-ary $n$-tuples is $k$.
\item[iii)] the number of symmetric non-uniform $k$-ary $n$-tuples is $k^{\lceil n/2\rceil}-k$;
\item[iv)] the number of asymmetric $k$-ary $n$-tuples is $k^n-k^{\lceil n/2\rceil}$;
\item[v)] the number of alternating $k$-ary $n$-tuples is $k(k-1)$;
\item[vi)]  the number of non-uniform non-alternating symmetric $k$-ary $n$-tuples is
    $k^{(n+1)/2}-k^2$ if $n$ is odd, and $k^{n/2}-k$ if $n$ is even;
\item[vii)] the number of left-semi-symmetric $k$-ary $n$-tuples is $k^{\lceil(n+1)/2\rceil}$;
\item[viii)] the number of non-uniform left-semi-symmetric $k$-ary $n$-tuples is
    $k^{\lceil(n+1)/2\rceil}-k$;
\item[ix)] the number of non-uniform non-alternating left-semi-symmetric $k$-ary $n$-tuples is
    $k^{(n+2)/2}-k^2$ if $n$ is even, and $k^{(n+1)/2}-k$ if $n$ is odd.
\end{itemize}
\end{lemma}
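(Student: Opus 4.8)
The plan is to prove the nine counts in order, each by identifying the coordinates that freely parameterise the relevant family and then using simple inclusion--exclusion for the ``non-uniform non-alternating'' variants. Throughout I would lean on the observation that a symmetric $n$-tuple is completely determined by its first $\lceil n/2\rceil$ entries, the remaining entries being forced by the palindrome condition $a_i=a_{n-1-i}$; conversely any choice of these entries yields a symmetric tuple. This gives (i) directly, and (ii) is immediate since a uniform tuple is determined by a single symbol.

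For (iii) and (iv) I would use that every uniform tuple is symmetric and every symmetric tuple is a special arbitrary tuple, so the counts follow by subtraction from (i) and (ii): the symmetric non-uniform tuples number $k^{\lceil n/2\rceil}-k$, and the asymmetric tuples number $k^n-k^{\lceil n/2\rceil}$. For (v), an alternating tuple is specified by the ordered pair $(c_0,c_1)$ of its even- and odd-indexed symbols; the map $(c_0,c_1)\mapsto\mathbf a$ is injective because positions of both parities occur (as $n\geq3$), and non-uniformity is exactly the requirement $c_0\neq c_1$, giving $k(k-1)$ admissible pairs. For (vii) I would invoke the characterisation that a left-semi-symmetric $n$-tuple is one whose length-$(n-1)$ prefix $(a_0,\ldots,a_{n-2})$ is symmetric, with $a_{n-1}$ free; applying (i) to this prefix gives $k^{\lceil(n-1)/2\rceil}$ choices, and multiplying by $k$ for $a_{n-1}$ yields $k^{\lceil(n-1)/2\rceil+1}=k^{\lceil(n+1)/2\rceil}$. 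Since every uniform tuple is left-semi-symmetric, (viii) follows by subtracting the $k$ uniform tuples.

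The substantive part is (vi) and (ix), where one must count how many alternating tuples are also symmetric (respectively left-semi-symmetric), and here the parity of $n$ drives the case split. I would settle this by comparing the parity of an index $i$ with that of its mirror image: for a symmetric tuple of length $n$ the mirror of position $i$ is $n-1-i$, which has the same parity as $i$ precisely when $n$ is odd. Hence when $n$ is odd every alternating tuple is automatically symmetric, whereas when $n$ is even the palindrome condition would force $c_0=c_1$, so no alternating tuple is symmetric. Combining this with (iii) and (v) by inclusion--exclusion then yields the two cases of (vi). For (ix) the identical argument is applied to the length-$(n-1)$ prefix that defines left-semi-symmetry: an alternating tuple has symmetric prefix exactly when $n-1$ is odd, i.e.\ when $n$ is even, so subtracting the $k$ uniform tuples and, in the even case, the $k(k-1)$ alternating left-semi-symmetric tuples from the count in (vii) produces the stated formulas.

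I expect the only place requiring genuine care is this parity bookkeeping in (vi) and (ix), where getting the residue of $n$ (respectively $n-1$) correct determines whether the alternating tuples contribute $k(k-1)$ or $0$ to the subtraction; every other step is a routine parameter count or an inclusion--exclusion using an earlier part of the lemma.
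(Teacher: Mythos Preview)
Your proposal is correct and follows essentially the same approach as the paper's proof: direct parameter counts for (i), (ii), (v), (vii), subtraction for (iii), (iv), (viii), and the parity observation that an alternating tuple of length $m$ is symmetric exactly when $m$ is odd, applied to the tuple itself for (vi) and to its length-$(n-1)$ prefix for (ix). Your explicit parity comparison of $i$ with $n-1-i$ is a slightly more detailed justification of what the paper simply asserts, but the logical structure is identical.
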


\begin{proof}
Result (i) follows since there are $k$ choices for each of the first $\lceil n/2\rceil$ positions
of an $n$-tuple.  (ii) is immediate, and (iii) is simply the difference between (i) and (ii). (iv)
follows from (i) and the trivial observation that there are $k^n$ possible $k$-ary $n$-tuples. (v)
is immediate by observing that there are $k$ choices for $c_0$ and $k-1$ choices for $c_1$, given
that the $n$-tuple is non-uniform.

For (vi), if $n$ is odd then every alternating $n$-tuple is symmetric, and hence the number of
non-uniform non-alternating symmetric $n$-tuples is the number of non-uniform symmetric $n$-tuples
less the number of alternating $n$-tuples (which are non-uniform by definition), i.e.\ the
difference between (iii) and (v).  If $n$ is even, then an alternating $n$-tuple cannot be
symmetric, and hence the number of non-uniform non-alternating symmetric $n$-tuples is the number
of non-uniform symmetric $n$-tuples, i.e.\ (iii).

(vii) follows from the fact that the first $\lceil(n-1)/2\rceil$ positions of the $n$-tuple and the
last position can be chosen freely, and the other positions are determined. (viii) is simply the
difference between (vii) and (ii), observing that a uniform $n$-tuple is left-semi-symmetric.

Finally, for (ix), if $n$ is even then every alternating $n$-tuple is left-semi-symmetric, and
hence the number of non-uniform non-alternating left-semi-symmetric $n$-tuples is the number of
non-uniform left-semi-symmetric $n$-tuples less the number of alternating $n$-tuples (which are
non-uniform by definition), i.e.\ the difference between (viii) and (v).  If $n$ is odd, then an
alternating $n$-tuple cannot be left-semi-symmetric, and hence the number of non-uniform
non-alternating left-semi-symmetric $n$-tuples is the number of non-uniform left-semi-symmetric
$n$-tuples, i.e.\ (viii).
\end{proof}

\subsection{Developing the bound}

Before establishing our new period bound, we first outline the proof strategy.  This will then
enable the proof of the bound to be described more simply.

In Sections~\ref{subsection:in-out-constraints} and \ref{subsection:parity-constraints} we
described two main ways in which we could establish that certain edges in $B^*_k(n-1)$ cannot occur
in $B(S,n)$ when $S$ is an $\mathcal{OS}_k(n)$.  In particular we showed that in two cases incoming
edges to certain categories of vertex cannot occur, and also in two cases that outgoing edges from
certain categories of vertex cannot occur.

This suggests a straightforward strategy for bounding the period of an $\mathcal{OS}_k(n)$, namely
that the period is at most half the maximum cardinality of $B(S,n)$ (by
Lemma~\ref{lemma:BSn_basic_properties}(i)).  In turn $|B(S,n)|$ is bounded above by the number of
edges in $B^*_k(n-1)$, i.e.\ $k^n-k^{\lceil n/2\rceil}$ by Lemma~\ref{lemma:tuple_numbers}(iv),
less the number of edges in $B^*_k(n-1)$ that cannot occur in $B(S,n)$ as specified in
Corollaries~\ref{corollary:excluded_tuples_by_unequal_degree} and
\ref{corollary:excluded-tuples-by-parity}.

This strategy is complicated by the fact that, as noted in
Section~\ref{subsection:constraint-interactions}, there is a danger of `double counting' certain
excluded edges.  For example, Corollary~\ref{corollary:excluded_tuples_by_unequal_degree} asserts
that certain outgoing edges from a left-semi-symmetric $(n-1)$-tuple cannot occur, and
Corollary~\ref{corollary:excluded-tuples-by-parity} asserts that certain edges incoming to a
uniform or symmetric $(n-1)$ tuple cannot occur.  These two sets of excluded edges may overlap, and
hence we need to take this into account; Lemma~\ref{lemma:constraint-interactions} is of key
importance in this respect.

The following notation is intended to simplify the arguments.  Let $U_{\text{in}}$ and
$U_{\text{out}}$ be the sets of incoming and outgoing edges excluded by
Corollary~\ref{corollary:excluded_tuples_by_unequal_degree}, and $P_{\text{in}}$ and
$P_{\text{out}}$ be the sets of incoming and outgoing edges excluded by
Corollary~\ref{corollary:excluded-tuples-by-parity}.

The above discussion leads to the following key lemma.

\begin{lemma}  \label{lemma:bounds-high-level}
If $k\geq2$ and $S$ is an $\mathcal{OS}_k(n)$ then:
\[ |B(S,n)|\leq \begin{cases}
k^2-k - |P_{\text{out}}|,
 & \text{if $n=2$} \\
k^3-k^2 - |P_{\text{out}}|-|P_{\text{in}}| + |P_{\text{out}}\cap P_{\text{in}}|,
 & \text{if $n=3$} \\
k^4-k^2 - |U_{\text{out}}|-|P_{\text{out}}|,
 & \text{if $n=4$} \\
k^n-k^{\lceil n/2\rceil} - |U_{\text{out}}|-|U_{\text{in}}|-|P_{\text{out}}|-|P_{\text{in}}| \\
~~~~+~|U_{\text{out}}\cap P_{\text{in}}|+|P_{\text{out}}\cap U_{\text{in}}| \\
~~~~+~|U_{\text{out}}\cap U_{\text{in}}|+|P_{\text{out}}\cap P_{\text{in}}|,
 & \text{if $n>4$.}
\end{cases} \]
where $|X\cap Y|$ denotes the maximum possible cardinality for such a set.
\end{lemma}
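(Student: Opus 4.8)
The plan is to bound $|B(S,n)|$ by starting from the full edge count of $B^*_k(n-1)$, namely $k^n-k^{\lceil n/2\rceil}$ (Lemma~\ref{lemma:tuple_numbers}(iv)), and then subtracting the numbers of edges that Corollaries~\ref{corollary:excluded_tuples_by_unequal_degree} and \ref{corollary:excluded-tuples-by-parity} guarantee cannot appear in $B(S,n)$. Let me think about what structure I'd need.

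The edges excluded by the in-out-degree constraints are $U_{\text{out}}$ and $U_{\text{in}}$, and those excluded by the parity constraints are $P_{\text{out}}$ and $P_{\text{in}}$. By inclusion-exclusion I'd want $|U_{\text{out}} \cup U_{\text{in}} \cup P_{\text{out}} \cup P_{\text{in}}|$, which naively has four singleton terms, six pairwise-intersection terms, four triple-intersection terms, and one quadruple term.

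Let me work out which intersection terms actually appear and which vanish. Within a single side, $U_{\text{out}}$ and $P_{\text{out}}$ are both collections of outgoing edges chosen "for each qualifying vertex"; since the set of vertices qualifying for $U_{\text{out}}$ (non-uniform left-semi-symmetric) is disjoint from those qualifying for $P_{\text{out}}$ (uniform, for $k$ even; symmetric non-uniform, for $k$ odd)... wait, I need Lemma~\ref{lemma:semi-symmetric_not_symmetric} here: a left-semi-symmetric symmetric tuple is uniform, so for $k$ odd the symmetric-non-uniform vertices are never left-semi-symmetric, making $U_{\text{out}}$ and $P_{\text{out}}$ draw from disjoint vertex sets. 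For $k$ even, uniform vertices are both, but the corollaries only guarantee excluded edges at *non-uniform* vertices for $U$, so again disjoint. Hence $U_{\text{out}}\cap P_{\text{out}}=\emptyset$ and similarly $U_{\text{in}}\cap P_{\text{in}}=\emptyset$, killing two pairwise terms and all the triple/quadruple terms that contain either. That leaves four pairwise cross-terms: $U_{\text{out}}\cap U_{\text{in}}$, $P_{\text{out}}\cap P_{\text{in}}$, $U_{\text{out}}\cap P_{\text{in}}$, $P_{\text{out}}\cap U_{\text{in}}$.

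So the generic ($n>4$) bound reads
\[
|B(S,n)|\leq k^n-k^{\lceil n/2\rceil}
 -|U_{\text{out}}|-|U_{\text{in}}|-|P_{\text{out}}|-|P_{\text{in}}|
 +|U_{\text{out}}\cap P_{\text{in}}|+|P_{\text{out}}\cap U_{\text{in}}|
 +|U_{\text{out}}\cap U_{\text{in}}|+|P_{\text{out}}\cap P_{\text{in}}|,
\]
matching the claim. The convention that $|X\cap Y|$ means the *maximum possible* cardinality is exactly right for an upper bound: since we subtract the union, we must subtract at least $\sum|X| - \sum|X\cap Y|$, and to keep the inequality valid when we don't know the exact overlaps we use the largest overlaps consistent with the geometry (governed by Lemma~\ref{lemma:constraint-interactions}, which characterizes when an edge is simultaneously outgoing-excluded and incoming-excluded). **The main obstacle** is justifying the special small-$n$ cases. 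For $n=2,3,4$ several of the excluded sets are empty or coincide, so the inclusion-exclusion collapses, and I'd verify each by checking the hypotheses of the corollaries: Corollary~\ref{corollary:excluded_tuples_by_unequal_degree} requires $n\geq4$, so $U_{\text{out}},U_{\text{in}}$ are empty for $n=2,3$; for $n=2$ there are no symmetric non-uniform $1$-tuples and no incoming-parity exclusions survive (per the Remark), leaving only $P_{\text{out}}$; for $n=3$ both $P$-terms appear but no $U$-terms; and for $n=4$ I'd need to check that $U_{\text{in}}$ and $P_{\text{in}}$ happen to be empty or absorbed so that only $U_{\text{out}}$ and $P_{\text{out}}$ remain with no surviving cross-intersections—this is where a short case-by-case degree argument (using that $n=4$ is even, so Corollary~\ref{corollary:left-right-in-out}(i),(ii) apply and the alternating exceptions are absent) is needed to confirm the stated form.
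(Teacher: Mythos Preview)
Your approach is essentially the paper's: start from the $k^n-k^{\lceil n/2\rceil}$ edges of $B^*_k(n-1)$, subtract the excluded sets, and use inclusion--exclusion, with the key observation being that $U_{\text{out}}\cap P_{\text{out}}=U_{\text{in}}\cap P_{\text{in}}=\emptyset$ via Lemma~\ref{lemma:semi-symmetric_not_symmetric} (a non-uniform symmetric $(n-1)$-tuple cannot be left- or right-semi-symmetric). That disjointness is exactly what the paper invokes, and it collapses the inclusion--exclusion to the four cross-terms you list.

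There is one point where your reasoning for the small-$n$ cases goes slightly astray. For $n=4$ you propose to check that $U_{\text{in}}$ and $P_{\text{in}}$ are ``empty or absorbed''; they are not. Rather, the lemma simply \emph{chooses not to use them}: since any subset of the excluded sets still yields a valid upper bound (provided overlaps within that subset are handled), the paper for $n=4$ uses only $U_{\text{out}}$ and $P_{\text{out}}$, which are disjoint, and stops there. The reason for this choice is that the cross-intersection estimates of Lemma~\ref{lemma:constraint-interactions}(ii)--(iv) require $n\geq5$, so for $n=4$ one cannot control $|U_{\text{out}}\cap U_{\text{in}}|$ etc.\ in the same way. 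The same logic explains $n=2$ (use only $P_{\text{out}}$) and $n=3$ (use only $P_{\text{out}},P_{\text{in}}$, noting Corollary~\ref{corollary:excluded_tuples_by_unequal_degree} needs $n\geq4$). So the small-$n$ cases are not obtained by proving extra vanishing, but by deliberately invoking fewer exclusion sets.
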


\begin{proof}
The argument for $n=2$ is immediate. A similar comment applies when $n=3$.  The $n=4$ and $n \ge 5$
cases follow from observing that $U_{\text{in}}\cap P_{\text{in}}=U_{\text{out}}\cap
P_{\text{out}}=\emptyset$ because a symmetric $(n-1)$-tuple can neither be non-uniform
left-semi-symmetric nor non-uniform right-semi-symmetric, from
Lemma~\ref{lemma:semi-symmetric_not_symmetric}.
\end{proof}

\begin{remark}
The reason to restrict the sets considered when $n\leq 4$ is because certain sets are empty or
might be equal for small $n$.  Moreover Corollary~\ref{corollary:excluded_tuples_by_unequal_degree}
only applies for $n\geq4$.
\end{remark}

\subsection{The bound}

We can now give the bound.

\begin{theorem} \label{theorem:OS_bound_4}
Suppose $k\geq2$ and $m$ is the period of an $\mathcal{OS}_k(n)$.
\begin{itemize}
\item[i)] If $n=2$ then:
\[ m \leq \begin{cases}
\frac{k^2-k}{2}, & \text{if $k$ is odd} \\
\frac{k^2-2k}{2}, & \text{if $k$ is even}
\end{cases} \]
\item[ii)] If $n=3$ then:
\[ m \leq \begin{cases}
\frac{k^3-k^2}{2}, & \text{if $k$ is odd} \\
\frac{k^3-k^2-2k}{2}, & \text{if $k$ is even}
\end{cases} \]
\item[iii)] If $n=4$ then:
\[ m \leq \begin{cases}
\frac{k^4-3k^2+2k}{2}, & \text{if $k$ is odd} \\
\frac{k^4-2k^2}{2}, & \text{if $k$ is even}
\end{cases}\]
\item[iv)] If $n>4$ then:
\[ m \leq \begin{cases}
\frac{k^n-3k^{(n+1)/2}-2k^{(n-1)/2}+k^3+3k^2}{2}, & \text{if $n$ and $k$ are odd} \\
\frac{k^n-3k^{(n+1)/2}+k^3+k^2-2k}{2}, & \text{if $n$ is odd and $k$ is even} \\
\frac{k^n-5k^{n/2}+4k^2}{2}, & \text{if $n$ is even and $k$ is odd} \\
\frac{k^n-3k^{n/2}+k^2-k}{2}, & \text{if $n$ and $k$ are even}
\end{cases}\]

\end{itemize}

\end{theorem}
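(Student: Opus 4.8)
The plan is to reduce everything to a counting problem and then evaluate the eight parity cases by direct substitution. By Lemma~\ref{lemma:BSn_basic_properties}(i) the period satisfies $m=|B(S,n)|/2$, so it suffices to establish the claimed upper bounds on $|B(S,n)|$, and these follow term by term from Lemma~\ref{lemma:bounds-high-level}. Thus the proof is essentially a bookkeeping exercise: evaluate each of $|U_{\text{out}}|$, $|U_{\text{in}}|$, $|P_{\text{out}}|$, $|P_{\text{in}}|$ and the relevant pairwise intersections, substitute into the appropriate case of Lemma~\ref{lemma:bounds-high-level}, simplify, and halve. I would organise the calculation according to the parities of $n$ and $k$, exactly as in the statement.

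First I would compute the singleton cardinalities. By the left--right symmetry of the de Bruijn digraph, $|U_{\text{in}}|=|U_{\text{out}}|$ and $|P_{\text{in}}|=|P_{\text{out}}|$, so only two counts are needed. Corollary~\ref{corollary:excluded_tuples_by_unequal_degree} identifies $U_{\text{out}}$ with the non-uniform (and, for $n$ odd, non-alternating) left-semi-symmetric $(n-1)$-tuples, while Corollary~\ref{corollary:excluded-tuples-by-parity} identifies $P_{\text{out}}$ with the uniform $(n-1)$-tuples when $k$ is even and with the symmetric non-uniform $(n-1)$-tuples when $k$ is odd. Applying Lemma~\ref{lemma:tuple_numbers} with $n$ replaced by $n-1$ then gives closed forms in each case; for instance $|U_{\text{out}}|=k^{n/2}-k$ for $n$ even and $|U_{\text{out}}|=k^{(n+1)/2}-k^2$ for $n$ odd. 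The cases $n\le 4$ are easy precisely because several of these classes are empty (there are no symmetric non-uniform $1$- or $2$-tuples, and the semi-symmetric exclusions of Corollary~\ref{corollary:excluded_tuples_by_unequal_degree} only begin at $n=4$), which is why Lemma~\ref{lemma:bounds-high-level} records shorter formulas for $n\in\{2,3,4\}$; these three cases reduce to substituting the singleton counts alone.

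The substantive work is the intersection terms in the $n>4$ case, which is where I expect the main obstacle to lie. Each pairwise intersection counts edges $(a_0,\ldots,a_{n-1})$ whose tail $\mathbf a=(a_0,\ldots,a_{n-2})$ and head $\mathbf b=(a_1,\ldots,a_{n-1})$ simultaneously satisfy two symmetry conditions, and this is exactly what Lemma~\ref{lemma:constraint-interactions} governs: $P_{\text{out}}\cap P_{\text{in}}$ corresponds to part (i) (both labels symmetric, hence both uniform or both alternating), $U_{\text{out}}\cap P_{\text{in}}$ and $P_{\text{out}}\cap U_{\text{in}}$ to the period-$3$ patterns of parts (ii)--(iii), and $U_{\text{out}}\cap U_{\text{in}}$ to the period-$4$ patterns of part (iv). For each I would read off the free coordinates from the relevant congruence of $n$ modulo $3$ or $4$, count the structured tuples, and then discard those failing to lie in $B^*_k(n-1)$ (the symmetric ones) or violating the non-uniform/non-alternating requirement inherited from the defining corollaries. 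The delicate point is that an off-by-$k$ error in these degenerate removals propagates straight into the low-order corrections ($+k^3$, $+3k^2$, $-2k$, etc.) of the final bound, so the residue-class case analysis must be done carefully. The vanishing of $U_{\text{in}}\cap P_{\text{in}}$ and $U_{\text{out}}\cap P_{\text{out}}$, already noted in the proof of Lemma~\ref{lemma:bounds-high-level} via Lemma~\ref{lemma:semi-symmetric_not_symmetric}, is what lets only these four cross terms appear.

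Carrying this out, the surviving intersections collapse to small polynomials in $k$. When $k$ is even, the uniformity of the $P$-vertices forces the only nonzero contribution to be $U_{\text{out}}\cap U_{\text{in}}$, which I expect to be $k(k-1)$ for $n$ even and $k^2(k-1)$ for $n$ odd; when $k$ is odd all four intersections can contribute, with total $4k(k-1)$ for $n$ even and $k(k-1)(k+2)=k^3+k^2-2k$ for $n$ odd (here $P_{\text{out}}\cap P_{\text{in}}=0$ since, by part (i), two symmetric neighbours must both be uniform when $n$ is odd). Substituting the singleton counts and these intersection totals into the $n>4$ formula of Lemma~\ref{lemma:bounds-high-level} and dividing by $2$ then yields the four displayed bounds, and the three bounds for $n\le 4$ follow by the same substitution into the shorter cases of that lemma.
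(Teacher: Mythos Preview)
Your plan is correct and mirrors the paper's proof almost exactly: both arguments reduce to evaluating $|U_{\text{out}}|,|U_{\text{in}}|,|P_{\text{out}}|,|P_{\text{in}}|$ from Lemma~\ref{lemma:tuple_numbers}, bounding the four cross-intersections via Lemma~\ref{lemma:constraint-interactions}, and substituting into the relevant case of Lemma~\ref{lemma:bounds-high-level}. Your stated intersection totals ($k^2(k-1)$ and $k(k-1)$ for $U_{\text{out}}\cap U_{\text{in}}$ according to the parity of $n$, $4k(k-1)$ and $k^3+k^2-2k$ for the combined overlaps when $k$ is odd) agree with the paper's, and your observation that the residue of $n$ modulo $3$ or $4$ always collapses to the same free-parameter count is exactly how the paper's case analysis proceeds.
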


\begin{proof}
The proof builds on, and uses the notation of, Lemma~\ref{lemma:bounds-high-level}.
\begin{itemize}

\item[i)] {\bf $n=2$}.  If $k$ is odd then $P_{\text{out}}=\emptyset$ by
    Corollary~\ref{corollary:excluded-tuples-by-parity}(iii), since there are no symmetric
    non-uniform 1-tuples.  If $k$ is even then, by
    Corollary~\ref{corollary:excluded-tuples-by-parity}(i), $|P_{\text{out}}|=k$, since there
    are $k$ uniform 1-tuples by Lemma~\ref{lemma:tuple_numbers}(ii). The result follows from
    Lemma~\ref{lemma:bounds-high-level}.

\item[ii)] {\bf $n=3$}.  If $k$ is odd then, by
    Corollary~\ref{corollary:excluded-tuples-by-parity}(iii),(iv),
    $|P_{\text{in}}|=|P_{\text{out}}|$ equals the number of symmetric non-uniform 2-tuples,
    i.e.\ zero by Lemma~\ref{lemma:tuple_numbers}(iii).

    If $k$ is even then, by Corollary~\ref{corollary:excluded-tuples-by-parity}(i),(ii),
    $|P_{\text{in}}|=|P_{\text{out}}|$ equals the number of uniform 2-tuples, i.e.\ $k$ by
    Lemma~\ref{lemma:tuple_numbers}(ii). Moreover, $P_{\text{in}}\cap
    P_{\text{out}}=\emptyset$, since an edge can only be outgoing from a uniform ($n-1)$-tuple
    and incoming to a uniform ($n-1)$-tuple if it corresponds to a uniform $n$-tuple, and such
    an edge cannot occur in $B^*_k(n-1)$. The result follows from
    Lemma~\ref{lemma:bounds-high-level}.

\item[iii)] {\bf $n=4$}. By Corollary~\ref{corollary:excluded_tuples_by_unequal_degree}(i),
    $|U_{\text{out}}|$ equals the number of non-uniform left-semi-symmetric $3$-tuples, i.e.\
    $k^2-k$ by Lemma~\ref{lemma:tuple_numbers}(viii).  If $k$ is odd then, by
    Corollary~\ref{corollary:excluded-tuples-by-parity} (iii), $|P_{\text{out}}|$ equals the
    number of symmetric non-uniform 3-tuples, i.e.\ $k^2-k$ by
    Lemma~\ref{lemma:tuple_numbers}(iii). If $k$ is even then, by
    Corollary~\ref{corollary:excluded-tuples-by-parity} (i), $|P_{\text{out}}|$ equals the
    number of uniform 3-tuples, i.e.\ $k$ by Lemma~\ref{lemma:tuple_numbers}(ii). The result
    follows from Lemma~\ref{lemma:bounds-high-level}.

\item[iv)a)] {\bf $n>4$; $n$ odd}.  By
    Corollary~\ref{corollary:excluded_tuples_by_unequal_degree}(iii),(iv),
    $|U_{\text{out}}|=|U_{\text{in}}|$ equals the number of non-uniform non-alternating
    left-semi-symmetric $(n-1)$-tuples, i.e.\ $k^{(n+1)/2}-k^2$ by
    Lemma~\ref{lemma:tuple_numbers}(ix).  Also, by
    Lemma~\ref{lemma:constraint-interactions}(iv), $|U_{\text{out}}\cap U_{\text{in}}|=k^3-k^2$
    since if, in the statement of the lemma, we choose $c_1\neq c_3$ if $n\equiv 1\pmod 4$, and
    $c_0\neq c_2$ if $n\equiv 3\pmod 4$, then $\mathbf{a}$ and $\mathbf{b}$ are non-uniform and
    non-alternating and there are $k^2(k-1)=k^3-k^2$ possibilities for the values $c_i$.

    If $k$ is odd then, by Corollary~\ref{corollary:excluded-tuples-by-parity} (iii),(iv),
    $|P_{\text{out}}|=|P_{\text{in}}|$ equals the number of symmetric non-uniform
    $(n-1)$-tuples, i.e.\ $k^{(n-1)/2}-k$ by Lemma~\ref{lemma:tuple_numbers}(iii).\newline By
    Lemma~\ref{lemma:constraint-interactions}(ii),(iii), $|U_{\text{out}}\cap
    P_{\text{in}}|=|P_{\text{out}}\cap U_{\text{in}}|=k(k-1)$ (removing the case where the
    $c_i$ are all equal).
    \newline By Lemma~\ref{lemma:constraint-interactions}(i), $P_{\text{out}}\cap
    P_{\text{in}}=\emptyset$.

    If $k$ is even then, by Corollary~\ref{corollary:excluded-tuples-by-parity}(i),(ii),
    $|P_{\text{out}}|=|P_{\text{in}}|$ equals the number of uniform $(n-1)$-tuples, i.e.\ $k$
    by Lemma~\ref{lemma:tuple_numbers}(ii).
    \newline By Corollary~\ref{corollary:excluded-tuples-by-parity}(i),(ii), $P_{\text{out}}$
    and $P_{\text{in}}$ only contain edges out-going/in-going from/to uniform $(n-1)$-tuples,
    and by Corollary~\ref{corollary:excluded_tuples_by_unequal_degree}(iii),(iv)
    $U_{\text{out}}$ and $U_{\text{in}}$ only contain edges that are out-going/in-going from/to
    non-uniform $(n-1)$-tuples, and hence $|U_{\text{out}}\cap
    P_{\text{in}}|=|P_{\text{out}}\cap U_{\text{in}}|=\emptyset$.
    \newline By Corollary~\ref{corollary:excluded-tuples-by-parity}(i),(ii), $P_{\text{out}}$
    and $P_{\text{in}}$ only contain edges out-going/in-going from/to uniform $(n-1)$-tuples,
    and hence an edge in $P_{\text{out}}\cap P_{\text{in}}$ must be uniform, i.e.\
    $P_{\text{out}}\cap P_{\text{in}}=\emptyset$.

    The result follows from Lemma~\ref{lemma:bounds-high-level}.

\item[iv)b)] {\bf $n>4$; $n$ even}. By
    Corollary~\ref{corollary:excluded_tuples_by_unequal_degree}(i),(ii),
    $|U_{\text{out}}|=|U_{\text{in}}|$ equals the number of non-uniform left-semi-symmetric
    $(n-1)$-tuples, i.e.\ $k^{n/2}-k$ by Lemma~\ref{lemma:tuple_numbers}(viii). Additionally,
    by Lemma~\ref{lemma:constraint-interactions}(iv), $|U_{\text{out}}\cap
    U_{\text{in}}|=k(k-1)$, by choosing $c_0$ and $c_2$ to be distinct.

    If $k$ is odd then, by Corollary~\ref{corollary:excluded-tuples-by-parity}(iii),(iv),
    $|P_{\text{out}}|=|P_{\text{in}}|$ equals the number of symmetric non-uniform
    $(n-1)$-tuples, i.e.\ $k^{n/2}-k$ by Lemma~\ref{lemma:tuple_numbers}(iii).
    \newline By Lemma~\ref{lemma:constraint-interactions}(ii),(iii), $|U_{\text{out}}\cap
    P_{\text{in}}|=|P_{\text{out}}\cap U_{\text{in}}|=k(k-1)$, by ensuring that $c_0$, $c_1$
    and $c_2$ are not all equal.
    \newline By Lemma~\ref{lemma:constraint-interactions}(i), $|P_{\text{out}}\cap P_{\text{in}}|=k(k-1)$,
    i.e.\ the number of non-uniform alternating $(n-1)$-tuples.

    If $k$ is even then, by Corollary~\ref{corollary:excluded-tuples-by-parity}(i),(ii),
    $|P_{\text{out}}=|P_{\text{in}}|$ equals the number of uniform $(n-1)$-tuples, i.e.\ $k$ by
    Lemma~\ref{lemma:tuple_numbers}(i).
    \newline By Corollary~\ref{corollary:excluded-tuples-by-parity}(i),(ii), $P_{\text{out}}$
    and $P_{\text{in}}$ only contain edges out-going/in-going from/to uniform $(n-1)$-tuples,
    and by Corollary~\ref{corollary:excluded_tuples_by_unequal_degree}(i),(ii) $U_{\text{out}}$
    and $U_{\text{in}}$ only contain edges that are out-going/in-going from/to non-uniform
    $(n-1)$-tuples, and hence $|U_{\text{out}}\cap P_{\text{in}}|=|P_{\text{out}}\cap
    U_{\text{in}}|=\emptyset$.
    \newline Finally, it is immediate that $P_{\text{out}}\cap P_{\text{in}}=\emptyset$.

    The result follows from Lemma~\ref{lemma:bounds-high-level}.

\end{itemize}
\end{proof}

\subsection{Numerical results}

The bounds resulting from Theorem~\ref{theorem:OS_bound_4} are tabulated for small $k$ and $n$ in
Table~\ref{table:new_OS_periods_bounds}.  The numbers given in brackets are the bounds derived from
\cite[Theorem 4.11]{Alhakim24a}, and are provided for comparison purposes.

\begin{table}[htb]
\centering \caption{Bounds --- new and (old) --- on the period of an $\mathcal{OS}_k(n)$}
\label{table:new_OS_periods_bounds}
%\begin{scriptsize}
\begin{footnotesize}
\begin{tabular}{crrrrrrr} \hline
$n$ & $k=2$   & $k=3$ & $k=4$   & $k=5$   & $k=6$    & $k=7$     & $k=8$     \\ \hline
2   & 0       & 3     & 4       & 10      & 12       & 21        & 24        \\
    & (0)     & (3)   & (4)     & (10)    & (12)     & (21)      & (24)      \\ \hline
3   & 0       & 9     & 20      & 50      & 84       & 147       & 216       \\
    & (1)     & (9)   & (22)    & (50)    & (87)     & (147)     & (220)     \\ \hline
4   & 4       & 30    & 112     & 280     & 612      & 1134      & 1984      \\
    & (5)     & (33)  & (118)   & (290)   & (627)    & (1155)    & (2012)    \\ \hline
5   & 8       & 99    & 452     & 1450    & 3684     & 8085      & 15896     \\
    & (11)    & (105) & (478)   & (1490)  & (3777)   & (8211)    & (16124)   \\ \hline
6   & 21      & 315   & 1958    & 7550    & 23019    & 58065     & 130332    \\
    & (27)    & (336) & (2014)  & (7680)  & (23217)  & (58464)   & (130812)  \\ \hline
7   & 44      & 972   & 7844    & 38100   & 138144   & 408072    & 1042712   \\
    & (55)    & (1032)& (8062)  & (38640) & (139317) & (410256)  & (1046524) \\ \hline
8   & 105     & 3096  & 32390   & 193800  & 837879   & 2876496   & 8382492   \\
    & (119)   & (3189)& (32638) & (194630)& (839157) & (2879835) & (8386556) \\ \hline
9   & 212     & 9423  & 129572  & 971350  & 5027304  & 20149437  & 67059992  \\
    & (239)   & (9645)& (130558)& (974390)& (5034957)& (20166027)& (67092476)\\ \hline
\end{tabular}
\end{footnotesize}
%\end{scriptsize}
\end{table}

\color{black}

\section{Eulerian cycles in the de Bruijn digraph}  \label{section:Eulerian_cycles}

\subsection{Overview}

Inspired by the well-known correspondence between $k$-ary de Bruijn sequences of order $n$ and
Eulerian cycles in the order $n-1$ de Bruijn graph $B_{k}(n-1)$ (see, for example,
\cite{Lempel70}), we next describe a correspondence between orientable sequences of order $n$ and
Eulerian cycles in subgraphs of the de Bruijn graph $B_{k}(n-1)$, where these subgraphs satisfy
certain special properties.  If we can then construct subgraphs with these special properties which
admit Eulerian cycles then we have a simple method of generating orientable sequences.

\subsection{A correspondence}

First observe that the set of directed edges in any subgraph of $B_{k}(n)$ corresponds to a subset
of the set $\mathbb{Z}_k^{n+1}$, i.e.\ a subset of all possible $k$-ary $(n+1)$-tuples.  We need
the following definitions.

The following is standard terminology.

\begin{definition}  \label{definition;Eulerian}
An \emph{Eulerian digraph} is a connected digraph for which every vertex has in-degree equal to out-degree.
\end{definition}

The name derives from the fact that there exists an Eulerian circuit, i.e.\ a circuit
visiting every edge,  in a digraph if and only if the digraph is Eulerian --- see, for example, Corollary 6.1 of
Gibbons \cite{Gibbons85}.  Moreover, there are simple and efficient algorithms for finding
Eulerian
circuits --- see for example Gibbons~\cite[Figure 6.5]{Gibbons85}.

We next define a special class of digraphs of importance for this paper.

\begin{definition}  \label{definition:antisymmetric}
Suppose $T$ is a subgraph of the de Bruijn digraph $B_{k}(n)$ for some $n\geq2$ and $k\geq2$.  $T$
is said to be \emph{antisymmetric} if the following property holds.

Suppose $\mathbf{x}=(x_0,x_1,\ldots,x_{n-1}),\mathbf{y}=(y_0,y_1,\ldots,y_{n-1})\in
\mathbb{Z}_k^n$, i.e.\ they are vertices in $T$.  Then if $(\mathbf{x},\mathbf{y})$ is an edge in
$T$, then $(\mathbf{y}^R,\mathbf{x}^R)$ is not an edge in $T$.
\end{definition}

\begin{definition}  \label{definition:En(s)}
Suppose $S$ is a periodic $k$-ary $n$-window sequence of period $m$ for some $n\geq2$, $k\geq2$
and
$m\geq1$. Then the \emph{edge-graph} $E_n(S)$ of $S$ is defined to be the subgraph of $B_{k}(n-1)$
whose directed edges correspond to $n$-tuples appearing in a period of $S$, i.e.\ with edge set
\[ \{ \mathbf{s}_n(i)~:~0\leq i\leq m-1  \} \]
and whose vertices are those vertices of $B_{k}(n-1)$ that have in-degree at least one.
\end{definition}

We can now state a key lemma.

\begin{lemma}  \label{lemma:OS_iff_antisymmetric}
Suppose $S$ is a $k$-ary periodic $n$-window sequence.  Then $S$ is an $\mathcal{OS}_k(n)$ if and
only if $E_n(S)$ is antisymmetric in $B_{k}(n-1)$.
\end{lemma}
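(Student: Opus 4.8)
The plan is to prove both implications by unwinding the definitions and exploiting the edge-reversal correspondence recorded earlier for the de Bruijn digraph. The key preliminary observation I would establish is a dictionary between $n$-tuples together with their reverses on the one hand, and edges together with their ``reversed'' edges on the other. Concretely, an edge of $B_k(n-1)$ written as a vertex pair $(\mathbf{x},\mathbf{y})$ corresponds to a unique $n$-tuple $\mathbf{u}$; by the remark following the definition of $B_k(n)$, the reversed $n$-tuple $\mathbf{u}^R$ is exactly the edge $(\mathbf{y}^R,\mathbf{x}^R)$. Hence the clause ``$(\mathbf{y}^R,\mathbf{x}^R)$ is not an edge of $T$'' in the definition of antisymmetry says precisely ``$\mathbf{u}^R$ is not an edge of $T$.'' With this in hand, antisymmetry of $E_n(S)$ reads: for every $n$-tuple $\mathbf{u}$ that is an edge of $E_n(S)$, the reverse $\mathbf{u}^R$ is not an edge of $E_n(S)$. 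Since the edge set of $E_n(S)$ is by definition $\{\mathbf{s}_n(i):0\le i\le m-1\}$, this is the statement that no $n$-tuple occurring in $S$ is the reverse of an $n$-tuple occurring in $S$.

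For the forward implication I would assume $S$ is an $\mathcal{OS}_k(n)$ and argue by contradiction. If $E_n(S)$ failed to be antisymmetric there would be an edge $(\mathbf{x},\mathbf{y})$ with $(\mathbf{y}^R,\mathbf{x}^R)$ also an edge. Writing the first edge as the $n$-tuple $\mathbf{u}=\mathbf{s}_n(i)$ for some $i$, the dictionary identifies the second edge as $\mathbf{u}^R=\mathbf{s}_n(j)$ for some $j$, whence $\mathbf{s}_n(i)=\mathbf{s}_n(j)^R$, contradicting the defining property of an orientable sequence.

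For the reverse implication I would assume $E_n(S)$ is antisymmetric; since $S$ is already an $n$-window sequence by hypothesis, only the orientability condition remains to be verified. Arguing again by contradiction, suppose $\mathbf{s}_n(i)=\mathbf{s}_n(j)^R$ for some $i,j$. Setting $\mathbf{u}=\mathbf{s}_n(i)$ makes both $\mathbf{u}$ and $\mathbf{u}^R=\mathbf{s}_n(j)$ edges of $E_n(S)$, i.e.\ both $(\mathbf{x},\mathbf{y})$ and $(\mathbf{y}^R,\mathbf{x}^R)$ are edges, contradicting antisymmetry.

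I do not expect a genuine obstacle here, as the argument is essentially a change of language; the only points requiring care are bookkeeping. First, one must correctly identify $\mathbf{y}^R,\mathbf{x}^R$ as the endpoints of the edge labelled $\mathbf{u}^R$, which is exactly the content of the earlier remark. Second, the case $i=j$ is subsumed automatically: a symmetric $n$-tuple occurring in $S$ would be an edge equal to its own reverse, which antisymmetry also forbids (taking $(\mathbf{y}^R,\mathbf{x}^R)=(\mathbf{x},\mathbf{y})$). Finally, I would note that the ``vertices of in-degree at least one'' clause in the definition of $E_n(S)$ causes no difficulty: because $S$ is periodic, the edge $\mathbf{s}_n(i)$ runs from $\mathbf{s}_{n-1}(i)$ to $\mathbf{s}_{n-1}(i+1)$, so every vertex arising as a tail also arises as the head of the preceding edge, and all the vertices invoked above indeed lie in $E_n(S)$.
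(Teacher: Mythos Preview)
Your proposal is correct and follows essentially the same approach as the paper: both directions are obtained by unwinding the definitions and using the correspondence between the $n$-tuple label of an edge and the reversed edge $(\mathbf{y}^R,\mathbf{x}^R)$. Your version is simply more explicit about the bookkeeping (the $i=j$ symmetric case and the vertex clause), but the underlying argument is identical.
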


\begin{proof}
Suppose $S$ is an $\mathcal{OS}_k(n)$.  Then, by definition, $\mathbf{s}_n(i) \neq
\mathbf{s}_n(j)^R$, for any $i,j$.  Hence, again by definition, this means that $\mathbf{a}\neq
\mathbf{b}^R$ for any edges $\mathbf{a},\mathbf{b}$ in $E_n(S)$.  Hence $E_n(S)$ is antisymmetric.

Now suppose $E_n(S)$ is antisymmetric, and hence $\mathbf{a}\neq \mathbf{b}^R$ for any edges
$\mathbf{a},\mathbf{b}$ in $E_n(S)$.  Thus, $\mathbf{s}_n(i) \neq \mathbf{s}_n(j)^R$, for any
$i,j$, and so $S$ is an $\mathcal{OS}_k(n)$.
\end{proof}

This enables us to give our main result.

\begin{theorem}  \label{theorem:correspondence}
If $S$ is an $\mathcal{OS}_k(n)$ of period $m$ then $E_n(S)$ is an antisymmetric Eulerian subgraph
of $B_{k}(n-1)$ containing $m$ edges.  Moreover, if $T$ is an antisymmetric Eulerian subgraph of
$B_{k}(n-1)$ with $m$ edges, then there exists an $\mathcal{OS}_k(n)$ $S$ of period $m$ such that
$E_n(S)=T$.
\end{theorem}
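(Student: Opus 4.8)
The plan is to prove the two directions separately, leaning on Lemma~\ref{lemma:OS_iff_antisymmetric} to dispose of antisymmetry in both and on the standard de Bruijn correspondence (a closed walk traversing each edge exactly once) to supply the Eulerian structure. The theorem is essentially a repackaging of that lemma together with the well-known sequence/circuit dictionary, so I do not anticipate a deep obstacle; the care is all in the bookkeeping.

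For the forward direction, suppose $S$ is an $\mathcal{OS}_k(n)$ of period $m$. First I would note that the $m$ $n$-tuples $\mathbf{s}_n(0),\ldots,\mathbf{s}_n(m-1)$ are pairwise distinct, since $S$ is in particular an $n$-window sequence; hence $E_n(S)$ has exactly $m$ edges. Next, consecutive tuples $\mathbf{s}_n(i)$ and $\mathbf{s}_n(i+1)$ overlap in $n-1$ symbols, so the cyclic list $\mathbf{s}_n(0),\mathbf{s}_n(1),\ldots,\mathbf{s}_n(m-1),\mathbf{s}_n(0)$ is a closed walk in $B_k(n-1)$ using every edge of $E_n(S)$ exactly once. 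A single closed walk covering all edges forces the digraph to be connected and every vertex to have in-degree equal to out-degree, so $E_n(S)$ is Eulerian. Antisymmetry is then exactly the content of Lemma~\ref{lemma:OS_iff_antisymmetric}.

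For the converse, suppose $T$ is an antisymmetric Eulerian subgraph of $B_k(n-1)$ with $m$ edges. Because $T$ is Eulerian it admits an Eulerian circuit $e_0,e_1,\ldots,e_{m-1}$ (returning to $e_0$), by the existence result cited after Definition~\ref{definition;Eulerian}. Each $e_i$ is an $n$-tuple, and consecutive edges share a vertex, so the last $n-1$ entries of $e_i$ agree with the first $n-1$ entries of $e_{i+1}$ (indices mod $m$). I would then define $S=(s_i)$ by letting $s_i$ be the first symbol of $e_{i \bmod m}$; this is periodic with period dividing $m$. A short induction on the overlap relations shows $\mathbf{s}_n(i)=e_{i \bmod m}$ for all $i$, whence the edge set of $E_n(S)$ is precisely $\{e_0,\ldots,e_{m-1}\}$, the edge set of $T$. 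Since $T$ is connected with $m\geq1$ edges it has no isolated vertices, so the vertex sets coincide too and $E_n(S)=T$.

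It then remains to verify that $S$ genuinely has period $m$ and is orientable. The period is exactly $m$ because the $e_i$ are distinct (an Eulerian circuit traverses each edge once), so no proper divisor $m'<m$ can be a period, as that would force $\mathbf{s}_n(0)=\mathbf{s}_n(m')$ with $0\neq m'$; distinctness of the $e_i$ likewise shows $S$ is an $n$-window sequence. Finally, since $E_n(S)=T$ is antisymmetric, Lemma~\ref{lemma:OS_iff_antisymmetric} yields that $S$ is an $\mathcal{OS}_k(n)$. The only place demanding attention is the converse bookkeeping — establishing $\mathbf{s}_n(i)=e_i$ from the edge-overlap relations and ruling out a smaller period — but this is routine once the Eulerian circuit is in hand, with the substantive equivalence (antisymmetric $\Leftrightarrow$ orientable) already isolated in the earlier lemma.
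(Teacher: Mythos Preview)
Your proposal is correct and follows essentially the same route as the paper: both directions hinge on Lemma~\ref{lemma:OS_iff_antisymmetric} for antisymmetry and on the standard sequence/Eulerian-circuit dictionary for the rest. If anything, you are more careful than the paper in the converse direction --- you explicitly verify that the period is exactly $m$ (not a proper divisor) and spell out the construction of $S$ from the circuit, whereas the paper simply asserts that the circuit ``corresponds to a sequence $S$ with period $m$ in the natural way'' and leaves $E_n(S)=T$ as ``straightforward to verify''.
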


\begin{proof}
Suppose $S=(s_i)$ is an $\mathcal{OS}_k(n)$ of period $m$.  Then, by
Lemma~\ref{lemma:OS_iff_antisymmetric}, $E_n(S)$ is antisymmetric in $B_{k}(n-1)$.  Also,
trivially, $E_n(S)$ contains $m$ edges.  Finally, if $(x_1,x_2,\ldots,x_{n-1})$ is a vertex in
$E_n(S)$, then the incoming edges all have labels of the form $(x,x_1,x_2,\ldots,x_{n-1})$ for
some
$x$, and correspond to $\mathbf{s}_n(i)$ for some $i$.  Then the edge corresponding to
$\mathbf{s}_n(i+1)$ will have a label of the form $(x_1,x_2,\ldots,x_{n-1},y)$ for some $y$, and
so
for every incoming edge there is an outgoing edge, and vice versa.  Moreover, by the definition of
an orientable sequence, distinct values of $x$ correspond to distinct values of $y$. Hence the
in-degree of every vertex is the same as the out-degree.  Finally, note that $E_n(S)$ is connected
since we only include vertices in $E_n(S)$ with in-degree greater than zero, and the sequence $S$
defines a path incorporating every vertex in $E_n(S)$.

Now suppose $T$ is an antisymmetric Eulerian subgraph of $B_{k}(n-1)$ with $m$ edges.  Then there
exists an Eulerian circuit of length $m$ in $B_{k}(n-1)$.  This circuit corresponds to a sequence
$S$ with period $m$ in the natural way.  $S$ is clearly an $n$-window sequence since the circuit
visits every edge exactly once, and each edge corresponds to a unique $n$-tuple.  It is
straightforward to verify that $E_n(S)=T$. Finally, $S$ is an $\mathcal{OS}_k(n)$ from
Lemma~\ref{lemma:OS_iff_antisymmetric}.
\end{proof}

\begin{remark}
In general, for any antisymmetric Eulerian subgraph of $B_{k}(n-1)$ $T$, there will exist many
orientable sequences with edge-graph $T$, since there may be many different Eulerian circuits in
$T$, each corresponding to a different sequence.
\end{remark}

\subsection{Implications}

Theorem~\ref{theorem:correspondence} means that if we can construct an antisymmetric Eulerian
subgraph of $B_{k}(n-1)$ with $m$ edges for `large' $m$ (i.e.\ with $m$ close to the maximum
possible), then we will immediately have a set of orientable sequences with period close to the
maximum.

As a result, in the remainder of this paper we consider ways of constructing sets of edges in
$B_{k}(n-1)$ that define an antisymmetric Eulerian subgraph of $B_{k}(n-1)$.

\section{A simple construction}  \label{section:construction}

\subsection{The subgraph}

We next give a very simple way of establishing an antisymmetric Eulerian subgraph of $B_{k}(n-1)$.

\begin{definition}  \label{definition:A_n(k)}
If $n\geq2$ and $k\geq3$ let $A_k(n)$ be the subgraph of $B_{k}(n-1)$ with edges corresponding to
the $n$-tuples $(a_0,a_1,\ldots,a_{n-1})$, where $a_i\in\mathbb{Z}_k$, for which
$a_{n-1}-a_0\in\{1,2,\ldots\lfloor(k-1)/2\rfloor\}$.
\end{definition}

It is simple to establish how many edges there are in $A_k(n)$.

\begin{lemma}  \label{lemma:Akn}
Suppose $n\geq2$ and $k\geq3$.  Then if $E$ is the set of edges in $A_k(n)$
\[ |E|= k^{n-1}\left\lfloor\frac{k-1}{2}\right\rfloor.\]
\end{lemma}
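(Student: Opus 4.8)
The plan is to count the $n$-tuples $(a_0,a_1,\ldots,a_{n-1})$ with $a_i\in\mathbb{Z}_k$ that satisfy the defining condition $a_{n-1}-a_0\in\{1,2,\ldots,\lfloor(k-1)/2\rfloor\}$, since by Definition~\ref{definition:A_n(k)} these are exactly the edges of $A_k(n)$. The key observation is that the condition constrains only the pair $(a_0,a_{n-1})$; the intermediate entries $a_1,a_2,\ldots,a_{n-2}$ are entirely free.

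First I would count the number of ordered pairs $(a_0,a_{n-1})$ with $a_{n-1}-a_0$ lying in the prescribed set. The natural approach is to fix $a_0\in\mathbb{Z}_k$ arbitrarily — giving $k$ choices — and then observe that for each fixed $a_0$, the requirement $a_{n-1}-a_0\in\{1,2,\ldots,\lfloor(k-1)/2\rfloor\}$ determines $a_{n-1}=a_0+d$ for exactly one value per $d$ in that difference set. Since the difference set has cardinality $\lfloor(k-1)/2\rfloor$, there are precisely $\lfloor(k-1)/2\rfloor$ valid choices of $a_{n-1}$ for each $a_0$, and these are all distinct elements of $\mathbb{Z}_k$ (addition by a fixed $a_0$ is a bijection on $\mathbb{Z}_k$). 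Hence the number of valid pairs $(a_0,a_{n-1})$ is $k\lfloor(k-1)/2\rfloor$.

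Next I would account for the free interior coordinates: the entries $a_1,\ldots,a_{n-2}$ number $n-2$ in total, each ranging freely over $\mathbb{Z}_k$, contributing a factor of $k^{n-2}$. Multiplying gives $|E| = k^{n-2}\cdot k\lfloor(k-1)/2\rfloor = k^{n-1}\lfloor(k-1)/2\rfloor$, as claimed. One should note the boundary case $n=2$, where there are no interior coordinates, so the factor $k^{n-2}=k^0=1$ and the count reduces to just the pairs $(a_0,a_1)$; the formula still holds.

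This proof is essentially routine, so there is no real obstacle — the only point requiring a moment's care is verifying that the condition genuinely decouples the endpoint pair from the interior, which is immediate from the fact that the defining inequality involves only $a_0$ and $a_{n-1}$. I would simply make explicit that the counting factorises as (number of valid endpoint pairs) times (number of free interior fillings), and that the endpoint count is $k$ times the size of the difference set.
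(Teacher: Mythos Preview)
Your proof is correct and follows essentially the same approach as the paper's: both count the edges by observing that all but one coordinate can be chosen freely in $\mathbb{Z}_k$, with the remaining coordinate constrained to $\lfloor(k-1)/2\rfloor$ values. The paper phrases it as $k$ choices for each of $a_0,\ldots,a_{n-2}$ and then $\lfloor(k-1)/2\rfloor$ choices for $a_{n-1}$, while you factor it as (endpoint pairs)$\times$(interior), but the argument is the same.
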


\begin{proof}
Suppose $(a_0,a_1,\ldots,a_{n-1})$ is an edge in $A_k(n)$.  Then there are $k$ choices for each
$a_i$, $0\leq i\leq n-2$.  Finally, there are $\lfloor(k-1)/2\rfloor$ choices for $a_{n-1}$ and
the
result follows.
\end{proof}

The following two results demonstrate the importance of $A_k(n)$.

\begin{lemma}  \label{lemma:tuples}
Suppose $n\geq2$ and $k\geq3$.  $A_k(n)$ is antisymmetric in the de Bruijn digraph $B_{k}(n-1)$.
\end{lemma}

\begin{proof}
If $\mathbf{s}=(s_0,s_1,\ldots,s_{n-1})$ is an edge in $A_k(n)$, then by definition
$s_{n-1}-s_0\in\{1,2,\ldots\lfloor(k-1)/2\rfloor\}$.  Hence
$s_0-s_{n-1}\in\{n-1,n-2,\ldots\lfloor(k+2)/2\rfloor\}$, so that $\mathbf{s}^R=(s_{n-1},s_{n-2},\ldots,s_0)$ is not an edge in $A_k(n)$, since
\[ \{1,2,\ldots\lfloor(k-1)/2\rfloor\}\cap\{k-1,k-2,\ldots\lfloor(k+2)/2\rfloor\}=\emptyset \]
and the result follows.
\end{proof}

We can now give the following.

\begin{theorem}  \label{theorem:A_n(k)_Eulerian}
Suppose $n\geq2$ and $k\geq5$.  $A_k(n)$ is an antisymmetric Eulerian subgraph of $B_{k}(n-1)$
with
$k^{n-1}\left\lfloor\frac{k-1}{2}\right\rfloor$ edges.
\end{theorem}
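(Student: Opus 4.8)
The statement packages three claims: antisymmetry, the edge count, and the Eulerian property. The first two are already in hand --- antisymmetry is Lemma~\ref{lemma:tuples} and the count $k^{n-1}\lfloor(k-1)/2\rfloor$ is Lemma~\ref{lemma:Akn} --- so the entire content of the proof is to verify that $A_k(n)$ is Eulerian, i.e.\ (Definition~\ref{definition;Eulerian}) that it is connected and has in-degree equal to out-degree at every vertex. The plan is to dispatch the balance condition by a one-line degree count. Writing $D=\lfloor(k-1)/2\rfloor$, a vertex $(v_0,\dots,v_{n-2})$ has outgoing edges exactly the tuples $(v_0,\dots,v_{n-2},y)$ with $y-v_0\in\{1,\dots,D\}$, giving out-degree $D$, and incoming edges exactly the tuples $(x,v_0,\dots,v_{n-2})$ with $v_{n-2}-x\in\{1,\dots,D\}$, giving in-degree $D$. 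Thus every vertex is balanced; and since $D\ge1$ no vertex is isolated, so all $k^{n-1}$ vertices of $B_k(n-1)$ are genuinely present.

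The real work is connectivity, and here I would exploit the special shape of the defining constraint. A walk in $A_k(n)$ is a symbol stream $s_0,s_1,s_2,\dots$ in which each traversed edge $(s_i,\dots,s_{i+n-1})$ satisfies $s_{i+n-1}-s_i\in\{1,\dots,D\}$; equivalently $s_j-s_{j-(n-1)}\in\{1,\dots,D\}$ for every $j\ge n-1$. The key observation is that this constraint \emph{decouples} the stream into $n-1$ independent ``threads'', thread $r$ consisting of the symbols at positions $\equiv r\pmod{n-1}$, with consecutive entries of each thread differing mod $k$ by an amount in $\{1,\dots,D\}$. Each thread is therefore an independent walk on $\mathbb{Z}_k$ with step set $\{1,\dots,D\}$, and any choice of such increments yields a legitimate walk in $A_k(n)$.

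To connect an arbitrary vertex $\mathbf{v}=(v_0,\dots,v_{n-2})$ to an arbitrary vertex $\mathbf{w}=(w_0,\dots,w_{n-2})$, I would start the stream with $s_0,\dots,s_{n-2}=v_0,\dots,v_{n-2}$ and realise $\mathbf{w}$ as the window at positions $M,\dots,M+n-2$ with $M=L(n-1)$ for a suitably large $L$. Because $M\equiv0\pmod{n-1}$, the target symbol $w_j$ lands in the same thread $j$ as the start symbol $v_j$ and sits exactly $L$ steps along it, so the task reduces to effecting, in each thread, the prescribed net change $(w_j-v_j)\bmod k$ in exactly $L$ steps with each step in $\{1,\dots,D\}$. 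The attainable integer step-sums fill the whole interval $[L,LD]$, which contains $L(D-1)+1$ consecutive integers; once $L(D-1)\ge k-1$ this interval meets every residue class mod $k$, so every net change is attainable --- simultaneously in all threads, since the threshold on $L$ is identical for each. This is exactly where the hypothesis $k\ge5$ enters: it is equivalent to $D\ge2$, i.e.\ $D-1\ge1$, which is what makes $[L,LD]$ grow to cover $\mathbb{Z}_k$ (for $k\in\{3,4\}$ one has $D=1$, every step is forced to be $+1$, all threads advance in lockstep, and connectivity genuinely fails). Taking e.g.\ $L=k-1$ then produces a directed walk from $\mathbf{v}$ to $\mathbf{w}$, establishing strong connectivity.

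Finally I would assemble the pieces: $A_k(n)$ is balanced and connected, hence Eulerian by Definition~\ref{definition;Eulerian}; it is antisymmetric by Lemma~\ref{lemma:tuples}; and it has $k^{n-1}\lfloor(k-1)/2\rfloor$ edges by Lemma~\ref{lemma:Akn}. The step I expect to be the main obstacle is the thread/window bookkeeping --- pinning down the alignment $M\equiv0\pmod{n-1}$ so that each target coordinate falls in the correct thread at a \emph{common} step-count $L$ --- since an index slip there is the only place the argument could quietly break; the degree count and the reductions to Lemmas~\ref{lemma:tuples} and~\ref{lemma:Akn} are routine by comparison.
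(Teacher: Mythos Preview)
Your proposal is correct. Both your argument and the paper's rest on the same underlying observation --- that a walk in $A_k(n)$ decouples into $n-1$ independent residue-class ``threads'' on $\mathbb{Z}_k$, each stepping by an amount in $\{1,\dots,D\}$ --- but you make this structure explicit and exploit it directly, while the paper uses it implicitly. The paper proves connectivity by showing one can change a \emph{single} coordinate by $+1$: it builds a path of $n-1$ steps that adds $+1$ to every coordinate except one (which gets $+2$), then iterates $k$ times so the $+1$'s cancel mod $k$ and only the designated coordinate has net change $+1$; induction on coordinates then reaches any target. Your version instead sets a common horizon $L$ and uses the fact that $[L,LD]$ eventually covers all residues mod $k$ to hit every target coordinate simultaneously. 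Your route is shorter and gives a smaller explicit path length (order $(k-1)(n-1)$ versus the paper's order $k(n-1)^2$), and it isolates cleanly exactly where $k\ge5$ (equivalently $D\ge2$) is needed; the paper's version is slightly more hands-on and constructive. Either way the degree balance and the reductions to Lemmas~\ref{lemma:tuples} and~\ref{lemma:Akn} are identical.
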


\begin{proof}
From Lemmas~\ref{lemma:tuples} and \ref{lemma:Akn} we need only show that $A_k(n)$ is connected
and
that every vertex has in-degree equal to its out-degree.

Consider any vertex $\mathbf{u}=(u_1,u_2,\ldots,u_{n-1})$ in $A_k(n)$. An incoming edge
\[ (s,u_1,u_2,\ldots,u_{n-1}) \]
must satisfy $u_{n-1}-s\in\{1,2,\ldots\lfloor(k-1)/2\rfloor\}$. Similarly an outgoing edge
\[(u_1,u_2,\ldots,u_{n-1},t)\]
must satisfy $t-u_1\in\{1,2,\ldots\lfloor(k-1)/2\rfloor\}$.  Regardless of the values of $u_1$ and
$u_{n-1}$ there are clearly $\lfloor(k-1)/2\rfloor$ possible values for both $s$ and $t$, i.e.\
the
in-degree of every vertex is the same as its out-degree; in both cases it will equal
$\lfloor(k-1)/2\rfloor$.

To show that $A_k(n)$ is connected observe that there will always exist a path from
\[(u_1,u_2,\ldots,u_{n-1}) \text{~to~} (u_1,u_2,\ldots,u_{j-1},u_{j}+1,u_{j+1},\ldots,u_{n-1}) \]
for any $j$.  This follows since, if $k\geq 5$, there is an edge from
\[ (u_1,u_2,\ldots,u_{n-1}) \text{~to~} (u_2,u_3,\ldots,u_{n-1},u_1+1) \]
and also an edge from
\[ (u_1,u_2,\ldots,u_{n-1}) \text{~to~} (u_2,u_3,\ldots,u_{n-1},u_1+2). \]
This enables a path of length $n$ to be constructed from $(u_1,u_2,\ldots,u_{n-1})$ to
$(u_1+1,u_2+1,\ldots,u_{j-1}+1,u_{j}+2,u_{j+1}+1,\ldots,u_{n-1}+1)$ for any $j$, and then a path
of
length $2n$ to $(u_1+2,u_2+2,\ldots,u_{j-1}+2,u_{j}+3,u_{j+1}+2,\ldots,u_{n-1}+2)$, etc., to get a
path to $(u_1+k,u_2+k,\ldots,u_{j-1}+k,u_{j}+k+1,u_{j+1}+k,\ldots,u_{n-1}+k)$ --- which is equal
to
$(u_1,u_2,\ldots,u_{j-1},u_{j}+1,u_{j+1},\ldots,u_{n-1})$.

Hence, inductively, there exists a path from $(u_1,u_2,\ldots,u_{n-1})$ to any vertex
$(v_1,v_2,\ldots,v_{n-1})$.

The result follows.
\end{proof}

\begin{remark}
We need to assume $k\geq5$ since, unfortunately, $A_k(n)$ is not connected if $k<5$.  If $k=3$ and
$n=3$ we have:
\[ 00\rightarrow01\rightarrow11\rightarrow12\rightarrow22\rightarrow20\rightarrow00 \]
and
\[ 02\rightarrow21\rightarrow10\rightarrow02 \]
i.e.\ circuits of length 6 and 3. Similarly if $k=3$ and $n=4$ we obtain three circuits each of
length 9.  Similar problems arise for $k=4$, because for $k=3$ and $k=4$ the in-degree and
out-degree of every vertex in $A_n(k)$ is only 1.
\end{remark}

Combining Theorems~\ref{theorem:A_n(k)_Eulerian} and \ref{theorem:correspondence} we immediately
obtain the following.

\begin{corollary}  \label{corollary:OSkn}
If $n\geq2$ and $k\geq5$ then there exists an $\mathcal{OS}_k(n)$ with period
\[k^{n-1}\left\lfloor\frac{k-1}{2}\right\rfloor.\]
\end{corollary}

\begin{example}
As an example of Corollary~\ref{corollary:OSkn}, consider the case $k=5$ and $n=3$.  The 50
3-tuples in $A_5(3)$ are listed in Table~\ref{table:5-ary_3_tuples}, and a period of an
$\mathcal{OS}_5(3)$ containing these 50 3-tuples is:
\[ [00123~40112~23344~00213~24304~21431~03142~03204~10224~41133]. \]

\begin{table}[htb]
\caption{$5$-ary $3$-tuples in $A_5(3)$} \label{table:5-ary_3_tuples}
\begin{center}

%\begin{tabular}{|cccccccccc|}
\begin{tabular}{|cc|cc|cc|cc|cc|}
\hline
001 & 002 & 102 & 103 & 203 & 204 & 304 & 300 & 400 & 401  \\
011 & 012 & 112 & 113 & 213 & 214 & 314 & 310 & 410 & 411  \\
021 & 022 & 122 & 123 & 223 & 224 & 324 & 320 & 420 & 421  \\
031 & 032 & 132 & 133 & 233 & 234 & 334 & 330 & 430 & 431  \\
041 & 042 & 142 & 143 & 243 & 244 & 344 & 340 & 440 & 441  \\
\hline
\end{tabular}
\end{center}
\end{table}
\end{example}

Table~\ref{table:OS_periods} tabulates the periods of the generated sequences for small $k$ and
$n$.   In each case the upper bound for the period (from Theorem~\ref{theorem:OS_bound_4}) is given
in brackets.

\begin{table}[htb]
\caption{$\mathcal{OS}_q(n)$ periods (and bounds)} \label{table:OS_periods}
\begin{center}

\begin{tabular}{c|rrrrr} \hline
$n$ & $k=5$    & $k=6$    & $k=7$     & $k=8$     & $k=9$      \\ \hline
2   & 10       & 12       & 21        & 24        & 36         \\
    & (10)     & (12)     & (21)      & (24)      & (36)       \\ \hline
3   & 50       & 72       & 147       & 192       & 324        \\
    & (50)     & (84)     & (147)     & (216)     & (324)      \\ \hline
4   & 250      & 432      & 1029      & 1536      & 2916       \\
    & (280)    & (612)    & (1134)    & (1984)    & (3168)     \\ \hline
5   & 1250     & 2592     & 7203      & 12288     & 26244      \\
    & (1450)   & (3684)   & (8085)    & (15896)   & (28836)    \\ \hline
6   & 6250     & 15552    & 50421     & 98304     & 236196     \\
    & (7550)   & (23019)  & (58065)   & (130332)  & (264060)   \\ \hline
7   & 31250    & 93312    & 352947    & 786432    & 2125764    \\
    & (38100)  & (138144) & (408072)  & (1042712) & (2381400) \\ \hline
8   & 156250   & 559872   & 2470629   & 6291456   & 19131876   \\
    & (193800) & (837879) & (2876496) & (8382492) & (21507120) \\ \hline
\end{tabular}

\end{center}
\end{table}

Comparison of the bound in Theorem~\ref{theorem:OS_bound_4} with Corollary~\ref{corollary:OSkn}
shows that the sequences have optimal period for $n=2$ and $n=3$ if $k$ is odd.

\subsection{Variants}

There are a number of other antisymmetric Eulerian subgraphs of $B_{k}(n-1)$ with the same number
of edges as $A_k(n)$.  We briefly mention two such examples, where in both cases we assume $k\geq
5$ and $n\geq2$.
\begin{itemize}
\item For odd $k$, define $C_k(n)$, where $(a_0,a_1,\ldots,a_{n-1})$ is an edge in $C_k(n)$ if
    and only if $a_{n-1}-a_0$ is odd. It is immediate to see that $C_k(n)$ is antisymmetric
    since if $(a_0,a_1,\ldots,a_{n-1})$ is an edge then $a_{0}-a_{n-1}$ is even, since $k$ is
    odd.  It can be shown that $C_k(n)$ is Eulerian using a similar argument to that for
    $A_n(k)$.

\item A second variant constrains more than the first and last elements of an $n$-tuple.
    Suppose $t\leq n/2$. Let $A_k(n,t)$ be the subgraph of $B_{k}(n-1)$ with edges equal to
the
    following set of $k$-ary $n$-tuples
\[ \left\{ (a_0,a_1,\ldots,a_{n-1}) :  \sum_{i=n-t}^{n-1}a_i-\sum_{i=0}^{t-1} a_i
\in\{1,2,\ldots\lfloor(k-1)/2\rfloor\} \right\}.  \]
If the $n$-tuple $\mathbf{s}$ is an element of $A_k(n,t)$, then $\mathbf{s}^R\not\in
A_k(n,t)$,
i.e.\ it is antisymmetric.  It again follows that it is Eulerian using a similar argument to
that for $A_k(n)$.
\end{itemize}
There are, no doubt, further variants that could be devised.

\section{A construction using the Lempel homomorphism}  \label{section:Lempel_construction}

We next show how, using the inverse of the Lempel homomorphism, a subgraph of $B_{k}(n-1)$ with
certain special properties can be used to construct an antisymmetric Eulerian subgraph of
$B_{k}(n)$.

\subsection{Preliminaries}

We first need to define the Lempel homomorphism $D$, that maps from $B_k(n)$ to $B_{k}(n-1)$. We
follow the definition of Alhakim and Akinwande \cite{Alhakim11}, who generalised the original
Lempel definition \cite{Lempel70}, that only applied for $k=2$, to alphabets of arbitrary size.

\begin{definition} \label{Lempel}
Define the function $D_{\beta}:B_k(n)\rightarrow B_{k}(n-1)$ as follows, where
$\beta\in\mathbb{Z}^*_k$. If $\mathbf{a} = (a_0,a_1,\ldots,a_{n-1})\in \mathbb{Z}_k^n$ then
\[ D_{\beta}(\mathbf{a}) = (\beta(a_1-a_0),\beta(a_2-a_1),\ldots\beta(a_{n-1}-a_{n-2})). \]
\end{definition}

Clearly $D_{\beta}$ is onto if and only if $\text{gcd}(\beta,k) = 1$.  We also have the following.

\begin{lemma}[Alhakim and Akinwande, \cite{Alhakim11}]
Every vertex in $B_{k}(n-1)$ has exactly $k$ images in $B_k(n)$ under $D_{\beta}^{-1}$ if and only
if $\beta$ is coprime to $k$.
\end{lemma}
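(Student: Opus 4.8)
The plan is to compute the cardinality of $D_\beta^{-1}(\mathbf{v})$ for an arbitrary vertex $\mathbf{v}=(v_0,v_1,\ldots,v_{n-2})$ of $B_{k}(n-1)$ by exploiting the telescoping structure of $D_\beta$. An $n$-tuple $\mathbf{a}=(a_0,a_1,\ldots,a_{n-1})$ belongs to $D_\beta^{-1}(\mathbf{v})$ exactly when $\beta(a_{i+1}-a_i)=v_i$ for each $i$ with $0\le i\le n-2$. Setting $d_i=a_{i+1}-a_i$, these conditions decouple into $n-1$ independent congruences $\beta d_i=v_i$ in $\mathbb{Z}_k$; moreover $a_0$ may be chosen freely in $\mathbb{Z}_k$, after which $a_1,\ldots,a_{n-1}$ are forced by $a_{i+1}=a_i+d_i$. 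Hence $|D_\beta^{-1}(\mathbf{v})|$ equals $k$ (the number of choices for $a_0$) times the number of difference sequences $(d_0,\ldots,d_{n-2})$ satisfying all the congruences.

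The second ingredient is the elementary count for a single congruence: in $\mathbb{Z}_k$ the equation $\beta x=v$ has exactly $\gcd(\beta,k)$ solutions if $\gcd(\beta,k)\mid v$, and none otherwise (an immediate consequence of B\'{e}zout's identity). I would apply this to each of the $n-1$ congruences independently.

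For the forward direction, assume $\gcd(\beta,k)=1$. Then each $\beta d_i=v_i$ has the unique solution $d_i=\beta^{-1}v_i$, so there is precisely one admissible difference sequence whatever $\mathbf{v}$ may be; together with the factor $k$ coming from $a_0$ this yields $|D_\beta^{-1}(\mathbf{v})|=k$ for every vertex. For the converse I would argue contrapositively: if $g=\gcd(\beta,k)>1$, take the vertex $\mathbf{v}=(1,0,\ldots,0)$. The congruence $\beta d_0=1$ then has no solution, since $g\nmid 1$, so this vertex has empty preimage and in particular does not have exactly $k$ preimages. Thus the ``exactly $k$'' property forces $\beta$ to be coprime to $k$, and the equivalence follows.

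The whole argument is routine once the decoupling is in place; the only point demanding a little care is cleanly separating the single free initial choice $a_0$ (which contributes the constant factor $k$) from the per-coordinate solution counts, so that both directions of the equivalence drop out without off-by-one confusion. I do not expect any genuine obstacle beyond stating this counting decomposition precisely.
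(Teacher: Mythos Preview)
Your argument is correct. The decomposition into a free choice of $a_0$ together with $n-1$ independent linear congruences $\beta d_i\equiv v_i\pmod{k}$ is exactly the right way to count preimages, and your contrapositive with $\mathbf{v}=(1,0,\ldots,0)$ cleanly handles the converse.

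There is nothing to compare against: the paper does not prove this lemma at all but simply quotes it from Alhakim and Akinwande~\cite{Alhakim11}. Your write-up therefore supplies a self-contained proof where the paper relies on an external reference.
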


In the remainder of this paper we are particularly interested in the case $\beta=1$, and we simply
write $D$ for $D_1$.

\subsection{Using the inverse homomorphism}

The following definition is key to the construction.

\begin{definition}
Suppose $T$ is a subgraph of the de Bruijn digraph $B_{k}(n-1)$ for some $n\geq2$ and $k\geq2$.
$T$ is said to be \emph{antinegasymmetric} if the following property holds.

Suppose $\mathbf{x}=(x_0,x_1,\ldots,x_{n-2}),\mathbf{y}=(y_0,y_1,\ldots,y_{n-2})\in
\mathbb{Z}_k^{n-1}$, i.e.\ they are vertices in $T$.  Then if $(\mathbf{x},\mathbf{y})$ is an
edge in $T$, then $(-\mathbf{y}^R,-\mathbf{x}^R)$ is not an edge in $T$.
\end{definition}

\begin{lemma}  \label{lemma:Lempel_antisymmetry}
Suppose $n\geq2$ and $k\geq3$.  If $T$ is an antinegasymmetric subgraph of the de Bruijn
digraph $B_{k}(n-1)$ with edge set $E$, then $D^{-1}(E)$, of cardinality $k|E|$, is the set of
edges for an antisymmetric subgraph of $B_{k}(n)$, which, abusing our notation slightly, we
refer to as $D^{-1}(T)$. Moreover, if every vertex of $T$ has in-degree equal to its
out-degree, then the same apples to $D^{-1}(T)$.
\end{lemma}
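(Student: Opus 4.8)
The plan is to work entirely at the level of edge labels, regarding each edge of $B_k(m)$ as its associated $(m+1)$-tuple and noting that the Lempel map $D$ of Definition~\ref{Lempel} (with $\beta=1$) is simply the consecutive-difference operator, which is well defined on tuples of any length and sends edges of $B_k(n)$ (the $(n+1)$-tuples) to edges of $B_k(n-1)$ (the $n$-tuples). First I would record the cardinality claim: given an edge $\mathbf{d}=(d_0,\ldots,d_{n-1})\in E$, its preimages $\mathbf{c}=(c_0,\ldots,c_n)$ under $D$ are exactly the tuples satisfying $c_{i+1}=c_i+d_i$, so $c_0$ may be chosen freely and then $\mathbf{c}$ is determined. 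Hence each edge of $E$ has exactly $k$ preimages (this is the $\beta=1$ case of the preimage-counting lemma quoted above), and since $D$ is a function these preimage sets are disjoint, giving $|D^{-1}(E)|=k|E|$.

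The heart of the antisymmetry claim is the single identity
\[ D(\mathbf{c}^R)=-\bigl(D(\mathbf{c})\bigr)^R, \]
valid for every edge $\mathbf{c}=(c_0,\ldots,c_n)$ of $B_k(n)$. I would verify it by direct index bookkeeping: writing $\mathbf{d}=D(\mathbf{c})$ with $d_i=c_{i+1}-c_i$, the $i$th difference of $\mathbf{c}^R$ is $c_{n-i-1}-c_{n-i}=-d_{n-i-1}$, which is precisely the $i$th entry of $-\mathbf{d}^R$. With this in hand the argument is short: if $\mathbf{c}\in D^{-1}(E)$ then $\mathbf{d}=D(\mathbf{c})\in E$, so by antinegasymmetry of $T$ the edge $-\mathbf{d}^R$ lies outside $E$; but $-\mathbf{d}^R=D(\mathbf{c}^R)$, so $\mathbf{c}^R\notin D^{-1}(E)$. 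Translating back to vertex-pair notation via the standard identification (an edge $(\mathbf{a},\mathbf{b})$ has reverse $(\mathbf{b}^R,\mathbf{a}^R)$) shows this is exactly the antisymmetry property of Definition~\ref{definition:antisymmetric} for $D^{-1}(T)$. The one point requiring care is confirming that the negation-with-reversal appearing in the definition of antinegasymmetric is matched exactly by the way $D$ intertwines reversal; this is why antinegasymmetry, rather than plain antisymmetry, is the correct hypothesis on $T$, and getting this identity and its indices right is the main (though still routine) obstacle.

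For the degree statement I would show that, for each vertex $\mathbf{v}=(v_0,\ldots,v_{n-1})$ of $B_k(n)$, the map $D$ restricts to a bijection from the out-edges of $\mathbf{v}$ onto the out-edges of the vertex $D(\mathbf{v})$ of $B_k(n-1)$, and likewise for in-edges. Indeed, an out-edge of $\mathbf{v}$ is labelled $(v_0,\ldots,v_{n-1},t)$, and its image under $D$ is the out-edge of $D(\mathbf{v})$ whose final entry is $t-v_{n-1}$; as $t$ ranges over $\mathbb{Z}_k$ so does $t-v_{n-1}$, giving a bijection, and dually an in-edge $(s,v_0,\ldots,v_{n-1})$ maps to the in-edge of $D(\mathbf{v})$ with leading entry $v_0-s$. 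Since an edge lies in $D^{-1}(T)$ exactly when its image lies in $T$, these bijections yield
\[ \mathrm{outdeg}_{D^{-1}(T)}(\mathbf{v})=\mathrm{outdeg}_{T}\bigl(D(\mathbf{v})\bigr), \qquad \mathrm{indeg}_{D^{-1}(T)}(\mathbf{v})=\mathrm{indeg}_{T}\bigl(D(\mathbf{v})\bigr). \]
Assuming in-degree equals out-degree at every vertex of $T$, the right-hand sides agree, hence so do the left-hand sides at every vertex of $D^{-1}(T)$, which completes the proof.
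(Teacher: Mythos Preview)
Your proof is correct and follows essentially the same approach as the paper: the paper argues by contradiction that if $\mathbf{a}=\mathbf{b}^R$ are both in $D^{-1}(E)$ then $D(\mathbf{a})=-D(\mathbf{b})^R$, which is your identity $D(\mathbf{c}^R)=-(D(\mathbf{c}))^R$ recast contrapositively, and for the degree claim the paper likewise notes that the in/out-degree of $\mathbf{v}$ in $D^{-1}(T)$ equals that of $D(\mathbf{v})$ in $T$. Your version is slightly more explicit about the preimage count and the bijection on incident edges, but the underlying arguments coincide.
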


\begin{proof}
Suppose $D^{-1}(T)$ is not antisymmetric, i.e.\ there exist $(n+1)$-tuples
$\mathbf{a}=(a_0,a_1,\ldots,a_{n}),\mathbf{b}=(b_0,b_1,\ldots,b_{n})\in D^{-1}(E)$ such that
$\mathbf{a}=\mathbf{b}^R$, i.e.\ $a_i=b_{n-i}$, for $0\leq i\leq n$.

Suppose also that $D(\mathbf{a})=\mathbf{c}$ and $D(\mathbf{b})=\mathbf{d}$, where
$\mathbf{c}=(c_0,c_1,\ldots,c_{n-1}),\mathbf{d}=(d_0,d_1,\ldots,d_{n-1})\in E$. Hence
$c_i=a_{i+1}-a_i$ and $d_i=b_{i+1}-b_i$ for $0\leq i\leq n-1$.  Since $a_i=b_{n-i}$ for $0\leq
i\leq n$, for any $j$ ($0\leq j\leq n-1$) we have:
\[ c_j = a_{j+1}-a_j = b_{n-(j+1)}-b_{n-j} = -(b_{n-j}-b_{(n-1)-j})= -d_{(n-1)-j}. \]
%\begin{align*}
%c_i & = & a_{i+1}-a_i \\
%    & = & b_{n+1-(i+1)}-b_{n+1-i} \\
%    & = & -(b_{n+1-i}-b_{n-i}) \\
%    & = & -d_{n-i}.
%\end{align*}
Hence $\mathbf{c}=-\mathbf{d}^R$, but this contradicts the assumption that $T$ is
antinegasymmetric.

Every edge in $E$ corresponds to $k$ edges in $D^{-1}(E)$, and hence $|D^{-1}(E)|=k|E|$.

It remains to show that every vertex of $D^{-1}(T)$ has in-degree equal to its out-degree.
Suppose $\mathbf{x}=(x_1,x_2,\ldots,x_{n})$ is a vertex of $D^{-1}(T)$.  For every edge
$\mathbf{a}\in D^{-1}(T)$ that ends in $\mathbf{x}$, there is a corresponding edge
$D(\mathbf{a})\in T$ that ends in $D(\mathbf{x})$.  That is, the in-degree of $\mathbf{x}$ in
$D^{-1}(T)$ will equal the in-degree of $D(\mathbf{x})$ in $T$.  An exactly similar result
holds for out-degree.  Since every vertex of $T$ has in-degree equal to its out-degree, the
same holds for $D^{-1}(T)$.
\end{proof}

\begin{example}
Observe that $E=\{(1,0),(1,1),(1,2),(0,1),(2,1)\}$ is the set of edges for an antinegasymmetric
subgraph of the de Bruijn digraph $B_{4}(1)$. Observe that the edges of $T$ are all the $4$-ary
$2$-tuples containing a $1$ and not containing a $3=-1$; this clearly guarantees antinegasymmetry.
It is also simple to see that the in-degree of every vertex is the same as its out-degree.

Then $D^{-1}(E)$ is equal to
\[ \{(a,a+1,a+1),(a,a+1,a+2),(a,a+1,a+3),(a,a,a+1),(a,a+2,a+3):a\in\mathbb{Z}_4\} \]
which is a set of 20 edges, which by Lemma~\ref{lemma:Lempel_antisymmetry} forms an antisymmetric
subgraph of the de Bruijn digraph $B_{4}(2)$ with in-degree equal to out-degree for every vertex.
It is also simple to check that the graph is connected (ignoring vertices with in-degree zero),
and
hence is Eulerian. As a result Eulerian circuits exist, each of which corresponds to an
$\mathcal{OS}_4(3)$ of period $20$. One such orientable sequence is the $\mathcal{OS}_4(3)$
$[00112012230130231233]$ of period $20$ found by Gabri\'{c} and Sawada \cite{Gabric25} via an
exhaustive search.
\end{example}

\subsection{Constructing antinegasymmetric subgraphs}

We next demonstrate how to construct antinegasymmetric subgraphs of the de Bruijn digraph
$B_{k}(n-1)$ for every $n\geq2$.  This builds on the work described in \cite{Mitchell25a}.

\begin{definition}[\cite{Mitchell25a}]  \label{definition:pseudoweight}
Suppose $\mathbf{u}=(u_0,u_1,\ldots,u_{n-1})$ is an $n$-tuple of elements of $\mathbb{Z}_k$
($k>1$). Define the function $f:\mathbb{Z}_k\rightarrow\mathbb{Q}$ as follows: for any
$u\in\mathbb{Z}_k$ treat $u$ as an integer in the range $[0,k-1]$ and set $f(u)=u$ if $u\neq0$ and
$f(u)=k/2$ if $u=0$. Then the \emph{pseudoweight} of $\mathbf{u}$ is defined to be the sum
\[ w^*(\mathbf{u}) = \sum_{i=0}^{n-1}f(u_i) \]
where the sum is computed in $\mathbb{Q}$.
\end{definition}

As a simple example for $k=3$, the 4-tuple $(0,1,1,2)$ has pseudoweight $1.5+1+1+2=5.5$, since
$f(0)=\frac{3}{2}$.

The following result is closely related to \cite[Theorem 3.14]{Mitchell25a}.

\begin{theorem}  \label{theorem:pseudoweight}
Suppose $n\geq2$ and $k\geq3$. If $E$ is the set of all $k$-ary $n$-tuples with pseudoweight less
than $nk/2$, then $E$ is the set of edges for an antinegasymmetric subgraph of the de Bruijn
digraph $B_{k}(n-1)$. Moreover, every vertex in this subgraph has in-degree equal to its
out-degree.
\end{theorem}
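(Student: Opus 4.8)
The plan is to reduce both claims to a single elementary identity for the weight function $f$, namely that $f(u)+f(-u)=k$ for every $u\in\mathbb{Z}_k$. First I would reformulate antinegasymmetry at the level of $n$-tuples. If $(\mathbf{x},\mathbf{y})$ is the edge corresponding to the $n$-tuple $\mathbf{a}=(a_0,a_1,\ldots,a_{n-1})$, so that $\mathbf{x}=(a_0,\ldots,a_{n-2})$ and $\mathbf{y}=(a_1,\ldots,a_{n-1})$, a direct computation of $-\mathbf{y}^R$ and $-\mathbf{x}^R$ shows that the forbidden edge $(-\mathbf{y}^R,-\mathbf{x}^R)$ is exactly the $n$-tuple $-\mathbf{a}^R=(-a_{n-1},-a_{n-2},\ldots,-a_0)$. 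Thus antinegasymmetry of the subgraph with edge set $E$ is equivalent to the implication $\mathbf{a}\in E\Rightarrow-\mathbf{a}^R\notin E$.

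Next I would establish the identity $f(u)+f(-u)=k$ by the two cases in the definition of $f$: for $u=0$ both terms equal $k/2$; and for $u\neq0$, since $-u$ is represented by the integer $k-u$ in $[1,k-1]$, we get $f(u)+f(-u)=u+(k-u)=k$. Because $w^*$ is a sum of $f$-values and is visibly invariant under reversal, summing this identity over the $n$ coordinates yields $w^*(\mathbf{a})+w^*(-\mathbf{a}^R)=nk$. Hence if $\mathbf{a}\in E$, so that $w^*(\mathbf{a})<nk/2$, then $w^*(-\mathbf{a}^R)=nk-w^*(\mathbf{a})>nk/2$, and therefore $-\mathbf{a}^R\notin E$; this proves antinegasymmetry. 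The strict inequality in the definition of $E$ handles the boundary cleanly, since a tuple of pseudoweight exactly $nk/2$ simply lies outside $E$.

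For the degree condition I would fix a vertex $\mathbf{v}=(v_0,\ldots,v_{n-2})$ and write $W$ for the pseudoweight of its $(n-1)$-tuple label. An incoming edge prepends a symbol $x$ and lies in $E$ precisely when $f(x)+W<nk/2$, while an outgoing edge appends a symbol $y$ and lies in $E$ precisely when $W+f(y)<nk/2$. These are the same constraint $f(\cdot)<nk/2-W$ on a single symbol, so the number of admissible incoming symbols equals the number of admissible outgoing symbols, giving equal in-degree and out-degree at every vertex.

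The argument is essentially a bookkeeping exercise once the identity $f(u)+f(-u)=k$ is available, so I expect no serious obstacle. The one point requiring care is the explicit identification of the negated-reversed edge $(-\mathbf{y}^R,-\mathbf{x}^R)$ with the $n$-tuple $-\mathbf{a}^R$, since it is precisely this identification that makes the additive symmetry of the pseudoweight directly applicable to the antinegasymmetry condition.
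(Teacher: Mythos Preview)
Your proposal is correct and follows essentially the same approach as the paper: both reduce antinegasymmetry to the identity $w^*(\mathbf{a})+w^*(-\mathbf{a}^R)=nk$ via $f(u)+f(-u)=k$, and both handle the degree condition by observing that the constraints on the prepended and appended symbol are identical. You are somewhat more explicit than the paper in verifying that the edge $(-\mathbf{y}^R,-\mathbf{x}^R)$ corresponds to the $n$-tuple $-\mathbf{a}^R$ and in writing $f(x)$ rather than $x$ in the degree argument, but these are presentational refinements rather than a different route.
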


\begin{proof}
Consider any $n$-tuple $\mathbf{u}=(u_0,u_1,\ldots,u_{n-1})\in E$.  By definition we know that
$w^*(\mathbf{u})<nq/2$. We claim that $w^*(-\mathbf{u}^R)=nk-w^*(\mathbf{u})$. This follows
immediately from the definition of $f$ since $f(-u_i)=k-f(u_i)$ for every possible value of $u_i$.

Hence, since $w^*(\mathbf{u})<nk/2$, it follows immediately that $w^*(-\mathbf{u}^R)>nk/2$.  Thus
the $n$-tuples in $E$ are all distinct from the $n$-tuples in
$-E^R=\{-\mathbf{u}^R~:~\mathbf{u}\in
E\}$. Hence $E$ is the set of edges for an antinegasymmetric subgraph of the de Bruijn digraph
$B_{k}(n-1)$.

It remains to show that the in-degree of every vertex is equal to its out-degree. Consider any
vertex $\mathbf{u}=(u_0,u_1,\ldots,u_{n-2})$ of $B_{k}(n-1)$. An incoming edge
\[ (s,u_0,u_1,\ldots,u_{n-2}) \] in $E$
must satisfy $s+w^*(\mathbf{u})<nk/2$. Similarly an outgoing edge
\[(u_0,u_1,\ldots,u_{n-2},t)\]
in $E$ must satisfy $t+w^*(\mathbf{u})<nk/2$.  That is, the in-degree of every vertex is the same
as its out-degree.
\end{proof}

It is clearly of interest to know the cardinality of the edge set $E$ of
Theorem~\ref{theorem:pseudoweight}.  Again following \cite{Mitchell25a} we make the following
definition.

\begin{definition}
If $k\geq2$ and $n\geq 1$, let $r_{k,n,s}$ denote the number of $k$-ary $n$-tuples with
pseudoweight exactly $s$, where $r_{k,n,s}=0$ by definition if $s<n$ or $s>n(k-1)$.
\end{definition}

We immediately have the following corollary of Theorem~\ref{theorem:pseudoweight}.

\begin{corollary}  \label{corollary:E_pseudoweight}
If $n\geq2$ and $k\geq3$, then $E$ as defined in Theorem~\ref{theorem:pseudoweight} is the set of
edges for an antinegasymmetric subgraph of the de Bruijn digraph $B_{k}(n-1)$ containing
$\frac{k^n-r_{k,n,nk/2}}{2}$ edges and where every vertex in this subgraph has in-degree equal to
its out-degree.
\end{corollary}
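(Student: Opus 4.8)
The plan is to observe that the bulk of the statement is already delivered by Theorem~\ref{theorem:pseudoweight}: that result establishes both that $E$ is the edge set of an antinegasymmetric subgraph of $B_k(n-1)$ and that every vertex has in-degree equal to its out-degree. Hence the only genuinely new content of the corollary is the cardinality formula $|E|=\frac{k^n-r_{k,n,nk/2}}{2}$. I would therefore simply cite the theorem for the two structural properties and devote the argument to counting the $n$-tuples of pseudoweight less than $nk/2$.

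First I would recall the weight identity already used in the proof of Theorem~\ref{theorem:pseudoweight}, namely $w^*(-\mathbf{u}^R)=nk-w^*(\mathbf{u})$ for every $n$-tuple $\mathbf{u}$, which followed from $f(-u_i)=k-f(u_i)$. The key observation is that the map $\mathbf{u}\mapsto-\mathbf{u}^R$ is an involution on $\mathbb{Z}_k^n$, and that it reflects pseudoweight about the midpoint $nk/2$, sending pseudoweight $s$ to pseudoweight $nk-s$. I would then partition $\mathbb{Z}_k^n$ into the three sets of $n$-tuples whose pseudoweight is respectively less than, equal to, or greater than $nk/2$.

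Next I would note that the involution restricts to a bijection between the first and third of these sets: it carries each into the other by the reflection property and is its own inverse, so the two strict sets are disjoint and equinumerous, the first being exactly $E$. By definition the middle set has precisely $r_{k,n,nk/2}$ elements. Since the three sets partition all $k^n$ tuples, I obtain $2|E|+r_{k,n,nk/2}=k^n$, which rearranges to the claimed formula. I do not anticipate any real obstacle: the argument is a clean weight-reflection counting argument. The only point worth a word of care is that any fixed point of the involution would satisfy $w^*(\mathbf{u})=nk-w^*(\mathbf{u})$ and hence lie in the middle set, so no tuple of pseudoweight away from $nk/2$ is left unpaired; the midpoint term $r_{k,n,nk/2}$ absorbs exactly whatever correction is needed to keep $|E|$ an integer.
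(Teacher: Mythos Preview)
Your proposal is correct and is exactly the argument the paper has in mind: the paper states the corollary as ``immediate'' from Theorem~\ref{theorem:pseudoweight} without spelling out the count, and your weight-reflection pairing via the involution $\mathbf{u}\mapsto-\mathbf{u}^R$ (using $w^*(-\mathbf{u}^R)=nk-w^*(\mathbf{u})$ from that theorem's proof) is precisely the intended justification for the cardinality formula.
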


A discussion of the properties of $r_{k,n,s}$ and a table of small values is given in \cite[Section
3.4]{Mitchell25a}.

From Lemma~\ref{lemma:Lempel_antisymmetry}, this means that $D^{-1}(E)$, where $E$ is as defined
in
Theorem~\ref{theorem:pseudoweight}, is the set of edges for an antisymmetric subgraph of $B_k(n)$,
where every vertex has in-degree equal to its out-degree.  It would be ideal if we could also show
that $D^{-1}(E)$ is the set of edges for a \emph{connected} subgraph of $B_{k}(n)$ and then we
would know that $D^{-1}(E)$ is the set of edges for an Eulerian antisymmetric subgraph of
$B_k(n)$.
We could then apply Theorem~\ref{theorem:correspondence} to obtain an $\mathcal{OS}_k(n+1)$ of
period $|D^{-1}(E)|=k|E|$.

Thus, showing that the subgraph is connected, after removal of any vertices with in-degree and
out-degree zero, is key.  This is our next focus.

\subsection{Establishing connectivity}

We next establish the key result that if $E$ is as in Theorem~\ref{theorem:pseudoweight} then
$D^{-1}(E)$ is the set of edges for a \emph{connected} subgraph of $B_{k}(n)$.  We first need the
following simple lemma.

\begin{lemma}  \label{lemma:degree-zero}
Suppose $n\geq2$ and $k\geq 3$. Suppose $E$ is as in Theorem~\ref{theorem:pseudoweight}. If
$\mathbf{a}=(a_0,a_1,\ldots,a_{n-1})$ is a vertex in the subgraph of $B_k(n)$ for which
$D^{-1}(E)$
is the set of edges, then $\mathbf{a}$ has in-degree and out-degree zero if and only if
\[ w^*(D(\mathbf{a}))=w^*((a_{1}-a_0,a_{2}-a_1,\ldots,a_{n-1}-a_{n-2}))\geq \frac{nk}{2}-1. \]
\end{lemma}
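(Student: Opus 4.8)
The plan is to characterize directly, in terms of pseudoweight, when the vertex $\mathbf{a}=(a_0,a_1,\ldots,a_{n-1})$ of $B_k(n)$ has an incident edge in $D^{-1}(E)$, and then to observe that the resulting condition is identical for incoming and outgoing edges. First I would consider an arbitrary outgoing edge, which is an $(n+1)$-tuple of the form $(a_0,a_1,\ldots,a_{n-1},t)$ for some $t\in\mathbb{Z}_k$. Applying $D$ gives the $n$-tuple $(a_1-a_0,\ldots,a_{n-1}-a_{n-2},t-a_{n-1})$, and since pseudoweight is additive over coordinates, its pseudoweight equals $w^*(D(\mathbf{a}))+f(t-a_{n-1})$, where $D(\mathbf{a})=(a_1-a_0,\ldots,a_{n-1}-a_{n-2})$. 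By the definition of $D^{-1}(E)$ together with Theorem~\ref{theorem:pseudoweight}, this edge lies in $D^{-1}(E)$ precisely when $w^*(D(\mathbf{a}))+f(t-a_{n-1})<nk/2$.

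The crux is then to identify the minimum value taken by $f$ over $\mathbb{Z}_k$. Since $k\geq3$, we have $f(0)=k/2\geq 3/2>1$, while $f(u)=u$ for $u\in\{1,\ldots,k-1\}$, so the minimum value of $f$ is $1$, attained at $u=1$ (and crucially \emph{not} at $u=0$). As $t$ ranges over $\mathbb{Z}_k$ the quantity $t-a_{n-1}$ ranges over all of $\mathbb{Z}_k$, so the smallest pseudoweight achievable by an outgoing edge is $w^*(D(\mathbf{a}))+1$. Hence some outgoing edge lies in $D^{-1}(E)$ if and only if $w^*(D(\mathbf{a}))+1<nk/2$, and consequently the out-degree of $\mathbf{a}$ is zero if and only if $w^*(D(\mathbf{a}))\geq nk/2-1$.

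I would then repeat the argument for incoming edges. An incoming edge has the form $(s,a_0,a_1,\ldots,a_{n-1})$ for $s\in\mathbb{Z}_k$, and $D$ maps it to $(a_0-s,a_1-a_0,\ldots,a_{n-1}-a_{n-2})$, of pseudoweight $f(a_0-s)+w^*(D(\mathbf{a}))$. Again $a_0-s$ ranges over all of $\mathbb{Z}_k$ as $s$ does, so by the same minimization the in-degree of $\mathbf{a}$ is zero if and only if $w^*(D(\mathbf{a}))\geq nk/2-1$. Since the two conditions coincide, the vertex has both in-degree and out-degree zero exactly when this single inequality holds, which is the claimed statement; this incidentally re-confirms that in-degree and out-degree vanish simultaneously, consistent with the in-degree-equals-out-degree property established in Lemma~\ref{lemma:Lempel_antisymmetry}.

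I expect no serious obstacle; the argument is essentially a short direct computation. The only points requiring care are the correct bookkeeping of pseudoweight under $D$ applied to a one-symbol extension of $\mathbf{a}$ --- namely that the extension contributes exactly the single extra term $f(t-a_{n-1})$ (respectively $f(a_0-s)$) on top of $w^*(D(\mathbf{a}))$ --- together with the elementary but decisive fact that $f$ attains its minimum value $1$ at $u=1$, which is precisely what forces the threshold to be $nk/2-1$ rather than $nk/2-k/2$.
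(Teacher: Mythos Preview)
Your proposal is correct and follows essentially the same approach as the paper: decompose the pseudoweight of the $D$-image of an incident edge as $w^*(D(\mathbf{a}))$ plus a single extra $f$-value, then use that $\min_{u\in\mathbb{Z}_k} f(u)=1$ (attained at $u=1$) to obtain the threshold $nk/2-1$. Your version is in fact slightly more explicit than the paper's, since you treat outgoing and incoming edges separately and spell out the minimization of $f$, whereas the paper argues only via incoming edges and exhibits the specific witness $(a_0-1,a_0,\ldots,a_{n-1})$ for the converse direction.
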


\begin{proof}
Suppose $(x,a_0,a_1,\ldots,a_{n-1})$ is an incoming edge to the vertex $\mathbf{a}$, for some $x$.
Then $D((x,a_0,a_1,\ldots,a_{n-1}))$ must be an edge in the subgraph of $B_k(n-1)$ for which $E$
is
the set of edges. Now
$D((x,a_0,a_1,\ldots,a_{n-1}))=(a_0-x,a_{1}-a_0,a_{2}-a_1,\ldots,a_{n-1}-a_{n-2})$ which must be
an
edge in $E$.  Thus, by definition of $E$:
\[ w^*((a_0-x,a_{1}-a_0,a_{2}-a_1,\ldots,a_{n-1}-a_{n-2}))< nk/2 \]
It is straightforward to see that
\begin{align*}
w^*((a_0-x,a_{1}-a_0,a_{2}-a_1,\ldots,a_{n-1}-a_{n-2})) =\\
w^*((a_0-x))+w^*((a_{1}-a_0,a_{2}-a_1,\ldots,a_{n-1}-a_{n-2})).
\end{align*}
and $w^*((a_0-x))\geq 1$; hence
\[ w^*((a_{1}-a_0,a_{2}-a_1,\ldots,a_{n-1}-a_{n-2}))< nk/2-1. \]
That is, $\mathbf{a}$ has in-degree and out-degree zero if
\[ w^*((a_{1}-a_0,a_{2}-a_1,\ldots,a_{n-1}-a_{n-2}))\geq nk/2-1. \]

To show that $\mathbf{a}$ has non-zero in-degree and out-degree if
\[ w^*((a_{1}-a_0,a_{2}-a_1,\ldots,a_{n-1}-a_{n-2}))< nk/2-1 \]
it suffices to point out that the edge $D((a_0-1,a_0,a_1,\ldots,a_{n-1}))$ will have pseudoweight
less than $1+(nk/2-1)=nk/2$, and hence $(a_0-1,a_0,a_1,\ldots,a_{n-1})$ is an edge in $E$.  The
result follows.
\end{proof}

We can now establish the main result.

\begin{theorem}  \label{theorem:D-1E_connected}
Suppose $n\geq2$ and $k\geq 3$. If $E$ is defined as in Theorem~\ref{theorem:pseudoweight} then
$D^{-1}(E)$ is the set of edges of a connected subgraph of $B_{k}(n)$.
\end{theorem}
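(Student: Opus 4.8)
The plan is to show that the support of $D^{-1}(E)$ is connected by routing every vertex of positive degree to a small, explicitly identified ``hub'' set that is itself connected. It is cleanest to work in the difference domain: identify each vertex $\mathbf{a}=(a_0,\ldots,a_{n-1})$ of $B_k(n)$ with the pair consisting of $a_0$ and the difference vector $D(\mathbf{a})=(a_1-a_0,\ldots,a_{n-1}-a_{n-2})$. Following an edge of $D^{-1}(E)$ out of $\mathbf{a}$ then corresponds to discarding the leftmost difference and appending a new difference $d$ on the right, subject to the resulting window of $n$ consecutive differences having pseudoweight less than $nk/2$; this is exactly the condition that the corresponding $(n+1)$-tuple lies in $D^{-1}(E)$, since that tuple's $D$-image is precisely this window.

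First I would invoke Lemma~\ref{lemma:degree-zero}: a vertex $\mathbf{a}$ lies in the subgraph (has positive degree) precisely when $w^*(D(\mathbf{a}))<nk/2-1$. Since $f(1)=1$ is the smallest possible value of $f$, appending the symbol $1$ to such a vertex produces a window of pseudoweight $w^*(D(\mathbf{a}))+1<nk/2$, so appending $1$ is a legal move from \emph{any} positive-degree vertex. The key observation is that this remains legal throughout a whole run of appends: each successive window is obtained from the previous one by discarding some difference $d_i$ (with $f(d_i)\geq 1$) and appending a $1$ (with $f(1)=1$), so the window pseudoweight is non-increasing and stays below $nk/2$. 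After $n-1$ such appends every original difference has been discarded and the difference vector has become $(1,1,\ldots,1)$; that is, we have walked to a vertex of the form $(c,c+1,\ldots,c+n-1)$, which I will call a \emph{ramp} vertex.

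Next I would observe that the $k$ ramp vertices form a directed cycle inside $D^{-1}(E)$: appending $1$ to $(c,c+1,\ldots,c+n-1)$ yields a window of $n$ ones, of pseudoweight $n<nk/2$ (using $k\geq 3$), and carries the ramp starting at $c$ to the ramp starting at $c+1$. Hence all ramp vertices are mutually reachable. Combining the two steps, every positive-degree vertex has a directed path into the ramp cycle, and the ramp cycle is connected, so every positive-degree vertex is connected (in the underlying undirected sense) to the ramp cycle and therefore the whole support of $D^{-1}(E)$ is connected. Since every vertex is balanced by Theorem~\ref{theorem:pseudoweight}, this is in fact equivalent to strong connectivity, which is more than the statement requires.

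The main obstacle is verifying that the entire run of appends stays inside $D^{-1}(E)$, not merely the first step. This is exactly what the monotonicity observation above handles: replacing an arbitrary difference by the minimal-weight symbol $1$ never increases the window pseudoweight. It is therefore crucial both that $f(1)=1$ attains the minimum of $f$ and that the degree condition of Lemma~\ref{lemma:degree-zero} supplies the strict slack $w^*(D(\mathbf{a}))<nk/2-1$ needed to make the very first append legal; once these two facts are in hand the remaining verifications are routine.
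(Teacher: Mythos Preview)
Your proposal is correct and follows essentially the same route as the paper: both proofs route every positive-degree vertex to the ``ramp'' vertices $(c,c+1,\ldots,c+n-1)$ by repeatedly appending a difference of $1$, and observe that the ramps themselves form a cycle. The only minor difference is that the paper also proves the symmetric ``prepend a $1$'' step (its Observation~C) to build an explicit directed path from a ramp to any target $\mathbf{b}$, whereas you shortcut this by invoking the balance of $D^{-1}(E)$ (which follows from Lemma~\ref{lemma:Lempel_antisymmetry} rather than Theorem~\ref{theorem:pseudoweight} directly); you also make the monotonicity of the window pseudoweight under repeated appends more explicit than the paper does.
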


\begin{proof}
By definition, the edges of $E$ consist of $k$-ary $n$-tuples with pseudoweight less than $kn/2$.
Suppose $\mathbf{a}=(a_0,a_1,\ldots,a_{n-1})$ and $\mathbf{b}=(b_0,b_1,\ldots,b_{n-1})$ are
vertices in the subgraph of $B_k(n)$ for which $D^{-1}(E)$ is the set of edges, satisfying
$w^*(D(\mathbf{a}))<nk/2-1$ and $w^*(D(\mathbf{b}))<nk/2-1$. We need to show that there is a path
from $\mathbf{a}$ to $\mathbf{b}$ in this subgraph.

The proof relies on three simple observations.
\begin{itemize}
\item {\bf Observation A}.  Suppose $\mathbf{c}=(c_0,c_1,\ldots,c_{n-1})$ is any vertex in the
    subgraph with non-zero in-degree, i.e.\ $w^*(D(\mathbf{c}))<nk/2-1$.  Consider the
    $(n+1)$-tuple $\mathbf{c}^+=(c_0,c_1,\ldots,c_{n-1},c_{n-1}+1)$. Now, by definition,
    $\mathbf{c}^+$ is an element of $D^{-1}(E)$ if and only if $w^*(D(\mathbf{c}^+))<nk/2$.
    However, trivially,
\[ w^*(D(\mathbf{c}^+))=w^*(D(\mathbf{c}))+1< (nk/2-1)+1=nk/2. \]
Hence $\mathbf{c}^+$ is always an element of $D^{-1}(E)$, and so there is always an edge from
$\mathbf{c}=(c_0,c_1,\ldots,c_{n-1})$ to $(c_1,c_2,\ldots,c_{n-1},c_{n-1}+1)$, as long as
$\mathbf{c}$ has non-zero out-degree.
\item {\bf Observation B}.  Suppose $d$ is an arbitrary element of $Z_k$, and consider the
    $(n+1)$-tuple $\mathbf{d}=(d,d+1,\ldots,d+n-1,d+n)$.  It is simple to verify that
    \[ w^*(D(\mathbf{d}))=n. \]
Now, since $k\geq3$, $w^*(D(\mathbf{d}))<nk/2$, and hence $\mathbf{d}$ is an element of
$D^{-1}(E)$.  Thus there is always an edge from $(d,d+1,\ldots,d+n-1)$ to
$(d+1,d+2,\ldots,d+n)$ for any $d$, and hence there is always a directed path from
$(d,d+1,\ldots,d+n-1)$ to $(e,e+1,\ldots,e+n-1)$ for any $d$ and $e$.
\item {\bf Observation C}.  Suppose $\mathbf{c}=(c_0,c_1,\ldots,c_{n-1})$ is any vertex in the
    subgraph with non-zero in-degree, i.e.\ $w^*(D(\mathbf{c}))<nk/2-1$.  Consider the
    $(n+1)$-tuple $\mathbf{c}^-=(c_0-1,c_0,c_1,\ldots,c_{n-1})$.  As previously
    \[ w^*(D(\mathbf{c}^-))=w^*(D(\mathbf{c}))+1< (nk/2-1)+1=nk/2. \]
Hence there is always an edge from $(c_0-1,c_0,c_1,\ldots,c_{n-2})$ to
$(\mathbf{c}=(c_0,c_1,\ldots,c_{n-1})$, as long as $\mathbf{c}$ has non-zero in-degree.
\end{itemize}

The proof now follows in three stages.
\begin{itemize}
\item Applying Observation A $n-1$ times, there exists a directed path from $\mathbf{a}$ to
the
    vertex $(a_{n-1},a_{n-1}+1,\ldots,a_{n-1}+(n-1))$.
\item From Observation B, there exists a directed path from
    $(a_{n-1},a_{n-1}+1,\ldots,a_{n-1}+n-1)$ to $(b_0-n+1,b_0-n+2,\ldots,b_0)$.
\item Applying Observation C $n-1$ times, there exists a directed path from
    $(b_0-n+1,b_0-n+2,\ldots,b_0)$ to $\mathbf{b}=(b_0,b_1,\ldots,b_{n-1})$.
\end{itemize}
That is, there exists a directed path from $\mathbf{a}$ to $\mathbf{b}$, and the result follows.
\end{proof}

Combining Corollary~\ref{corollary:E_pseudoweight} with Theorems~\ref{theorem:correspondence} and
\ref{theorem:D-1E_connected} gives the following.

\begin{corollary}  \label{corollary:OS_from_antineg}
If $k\geq3$ and $n\geq 2$ there exists an $\mathcal{OS}_k(n+1)$ with period
\[ k\frac{k^n-r_{k,n,nk/2}}{2}. \]
\end{corollary}

\begin{remark}
We have defined a set of edges forming an Eulerian subgraph of $B_k(n)$, and every Eulerian
circuit
in this subgraph will yield an $\mathcal{OS}_k(n+1)$.  This approach will thus yield many
different
such sequences, since there will be many possible Eulerian circuits.
\end{remark}

\begin{example}
As an example of Corollary~\ref{corollary:OS_from_antineg}, consider the case $k=3$ and $n=3$.
The
ten 3-ary 3-tuples having pseudoweight less than 4.5 are listed in
Table~\ref{table:3-ary_3_tuples}
--- these form the set $E$.  The set $D^{-1}(E)$ consists of the 30 4-tuples given in
Table~\ref{table:3-ary_4_tuples}, where the 4-tuples are grouped in threes according to the
element
of $E$ of which they are pre-images under $D$.  Finally, a period of an $\mathcal{OS}_5(3)$
containing the 4-tuples in $D^{-1}(E)$ is:
\[ [01201~21202~01012~22011~20011~12200] \].

\begin{table}[htb]
\caption{$E$: $3$-ary $3$-tuples with pseudoweight less than $4.5$} \label{table:3-ary_3_tuples}
\begin{center}

\begin{tabular}{ccc}
\hline
111 \\
011 & 101 & 110 \\
001 & 010 & 100 \\
112 & 121 & 211 \\
\hline
\end{tabular}

\end{center}
\end{table}

\begin{table}[htb]
\caption{$D^{-1}(E)$: $3$-ary $4$-tuples} \label{table:3-ary_4_tuples}
\begin{center}

\begin{tabular}{ccc|ccc|ccc}
\hline
0120 & 1201 & 2012 &   \\
0012 & 1120 & 2201 & 0112 & 1220 & 2001 & 0122 & 1200 & 2011 \\
0001 & 1112 & 2220 & 0011 & 1122 & 2200 & 0111 & 1222 & 2000 \\
0121 & 1202 & 2010 & 0101 & 1212 & 2020 & 0201 & 1012 & 2120 \\
\hline
\end{tabular}

\end{center}
\end{table}

\end{example}

The periods of the sequences of Corollary~\ref{corollary:OS_from_antineg} for small $k$ and $n$ are
given in Table~\ref{table:OS_antineg_periods_bounds}, along with the bound on the period from
Theorem~\ref{theorem:OS_bound_4}. In the cases $n=3$ and $n=4$ ($k$ odd), the periods of the
sequences of Corollary~\ref{corollary:OS_from_antineg} meet the bound of
Theorem~\ref{theorem:OS_bound_4}.

\begin{table}[htb]
\centering \caption{Periods of the constructed $\mathcal{OS}_k(n)$ (and bounds)}
\label{table:OS_antineg_periods_bounds}
%\begin{scriptsize}

\begin{tabular}{c|rrrrrr} \hline
$n$ & $k=3$   & $k=4$    & $k=5$    & $k=6$    & $k=7$     & $k=8$     \\ \hline
3   & 9       & 20       & 50       & 84       & 147       & 216       \\
    & (9)     & (20)     & (50)     & (84)     & (147)     & (216)     \\ \hline
4   & 30      & 88       & 280      & 534      & 1134      & 1800      \\
    & (30)    & (112)    & (280)    & (612)    & (1134)    & (1984)    \\ \hline
5   & 93      & 372      & 1390     & 3300     & 7763      & 14680     \\
    & (999)   & (452)    & (1450)   & (3684)   & (8085)    & (15896)   \\ \hline
6   & 288     & 1544     & 7160     & 20172    & 56056     & 118864    \\
    & (315)   & (1958)   & (7550)   & (23019)  & (58065)   & (130332)  \\ \hline
7   & 882     & 6344     & 35810    & 122646   & 388626    & 959160    \\
    & (972)   & (7844)   & (38100)  & (138144) & (408072)  & (1042712) \\ \hline
8   & 2691    & 25904    & 181100   & 743370   & 2757937   & 7724552   \\
    & (3096)  & (32390)  & (193800) & (837879) & (2876496) & (8382492) \\ \hline
\end{tabular}

%\end{scriptsize}
\end{table}

\section{Relation to other work}  \label{section:other_work}

It is of interest to consider how the construction methods described here affect efforts to find
orientable sequences with the largest possible period.  The current state of knowledge for small
$n$ and $k>2$ in this direction is summarised in Table~\ref{table:OS_periods_bounds}, where the
upper bound from Theorem~\ref{theorem:OS_bound_4} is given in brackets beneath the largest known
period.

\begin{table}[htb]
\centering
\caption{Largest known periods for an $\mathcal{OS}_k(n)$ (and bounds)}
\label{table:OS_periods_bounds}
%\begin{scriptsize}

\begin{tabular}{crrrrrr} \hline
$n$ & $k=3$   & $k=4$    & $k=5$    & $k=6$    & $k=7$     & $k=8$     \\ \hline
2   & {\bf 3} & {\bf 4}  & {\bf 10} & {\bf 12} & {\bf 21}  & {\bf 24}  \\
    & (3)     & (4)      & (10)     & (12)     & (21)      & (24)      \\ \hline
3   & {\bf 9} & {\bf 20} & {\bf 50} & {\bf 84} & {\bf 147} & {\bf  216}\\
    & (9)     & (20)     & (50)     & (84)     & (147)     & (216)     \\ \hline
4   & {\bf 30}& 88       & {\bf 280}& 534      & {\bf 1134}&  1800     \\
    & (30)    & (112)    & (280)    & (612)    & (1134)    & (1984)    \\ \hline
5   & 93      & 372      & 1390     & 3360     & 7763      &  15120    \\
    & (99)    & (452)    & (1450)   & (3684)   & (8085)    & (15896)   \\ \hline
6   & 288     & 1608     & 7160     & 21150    & 56056     &  124320   \\
    & (315)   & (1958)   & (7550)   & (23019)  & (58065)   & (130332)  \\ \hline
7   & 882     & 7308     & 36890    & 135450   & 403389    &  1034264  \\
    & (972)   & (7844)   & (38100)  & (138144) & (408072)  & (1042712) \\ \hline
8   & 2691    & 30300    & 187980   & 821940   & 2844408   &  8315496  \\
    & (3096)  & (32390)  & (193800) & (837879) & (2876496) & (8382492) \\ \hline
\end{tabular}
%\end{scriptsize}
\end{table}

The following observations can be made about this table.  The bound values (in brackets) follow
from Theorem~\ref{theorem:OS_bound_4}. The values in bold represent maximal values.
\begin{itemize}
\item $n=2$:  The fact that there exists an $\mathcal{OS}_k(2)$ with period meeting the bound
    follows from \cite[Theorem 5.4]{Alhakim24a} (for $k$ prime), and from \cite[Theorem
    2]{Gabric25} and \cite[Lemma 2.2]{Mitchell25a} for general $k$.
\item $n=3$: The existence of an $\mathcal{OS}_k(3)$ meeting the period bound is due to
    \cite[Example 5.1]{Alhakim24a} for $k=3$, \cite[Section 1]{Gabric25} for $k=5$ (from an
    exhaustive search), and for general $k$ from this paper.
\item $n=4$: The fact that the maximum period of an $\mathcal{OS}_3(4)$ is 30 is again due to
    \cite[Section 1]{Gabric25}, and the existence of an $\mathcal{OS}_k(4)$ meeting the bound
    for general odd $k$ is from this paper. All values for $4\leq k\leq 8$ follow from this
    paper.
\item $n=5$: The $\mathcal{OS}_3(5)$, $\mathcal{OS}_4(5)$, $\mathcal{OS}_5(5)$ and
    $\mathcal{OS}_7(5)$, respectively of periods 93, 372, 1390 and 7763, come from this paper.
    The values for $k=6$ and $k=8$ come from \cite[Theorem 11]{Gabric25} from cycle-joining.
\item $n\geq6$: The $\mathcal{OS}_3(6)$, $\mathcal{OS}_5(6)$, $\mathcal{OS}_3(7)$ and
    $\mathcal{OS}_3(8)$, of periods 288, 7160, 882 and 2691 respectively, come from this paper.
    All other values in the table come from \cite[Theorem 11]{Gabric25}.
\end{itemize}

\section{Conclusions}  \label{section:conclusions}

We have presented tighter upper bounds on the period of an orientable sequence and a general
approach for constructing orientable sequences for any $n\geq2$ and any $k\geq3$. For $n\leq3$, and
$n=4$ when $k$ is odd, the results show that the bound on the period is tight. The construction
method gives sequences of slightly lesser period than the approach of Gabri\'{c} and Sawada
\cite[Theorem 11]{Gabric25} for larger $n$, although all the sequences constructed here have period
greater than $(k-1)/k$ of the maximum for odd $k$, and greater than $(k-2)/k$ of the maximum for
even $k$.

Table~\ref{table:OS_periods_bounds} reveals that further work is required to close the gap between
the known largest period and the best existing upper bound for $k$ even and $n=4$ as well as for
all $k$ when $n>4$.

%\bibliographystyle{plain}
%\bibliographystyle{amsplain}
%\bibliography{Coding}

\providecommand{\bysame}{\leavevmode\hbox to3em{\hrulefill}\thinspace}
\providecommand{\MR}{\relax\ifhmode\unskip\space\fi MR }
% \MRhref is called by the amsart/book/proc definition of \MR.
\providecommand{\MRhref}[2]{%
  \href{http://www.ams.org/mathscinet-getitem?mr=#1}{#2}
} \providecommand{\href}[2]{#2}

\end{document}